\newtheorem{theo}{Theorem}[section]
\newtheorem{prop}[theo]{Proposition}
\newtheorem{lemm}[theo]{Lemma}
\newtheorem{cor}[theo]{Corollary}
\theoremstyle{definition}
\newtheorem{defi}[theo]{Definition}
\newtheorem{exam}[theo]{Example}
\newtheorem{remark}[theo]{Remark}
\newcommand{\Z}{\mathbb{Z}}
\newcommand{\R}{\mathbb{R}}
\title{Quandle coloring quivers of links using dihedral quandles\footnotemark}
\author{Yuta Taniguchi}
\date{}
\begin{document}

\maketitle
\footnotetext[\ast]{
{\bf keywords:} {quandle, quandle coloring quiver, dihedral quandle}\\
{\bf Mathematics Subject Classification 2010:} {57M25, 57M27}\\
{\bf Mathematics Subject Classification 2020:} {57K10, 57K12}
}

\section{Introduction}

 A {\it quandle} (\cite{Joy, Mat}) is an algebraic structure defined on a set with a binary operation whose definition was motivated from knot theory. D.~Joyce and S.~V.~Matveev \cite{Joy, Mat} associated a quandle to a link, which is so-called the {\it link quandle}  or the {\it fundamental quandle} of a link. 
 Since then many link invariants using link quandles have been introduced and studied. 
  A typical and elementary example is the {\it quandle coloring number} which is the cardinal number of the set of quandle homomorphisms from the link quandle to a fixed finite quandle. A {\it quandle cocycle invariant} \cite{CJKLS} and a {\it shadow quandle cocycle invariant} 
 (cf. \cite{CKS2004})
  are also such link invariants, which are enhancements of the quandle coloring number.

 In $2019$, K. Cho and S. Nelson \cite{Cho2019quiver} introduced the notion of a {\it quandle coloring quiver}, which is a quiver-valued
link invariant, and gave interesting examples.  
 This invariant is defined when we fix a finite quandle and a set of its endomorphisms. 
They also introduced in \cite{Cho2019cocycle} the notion of a {\it quandle cocycle quiver} which is an enhancement of the quandle coloring quiver by assigning to each vertex  a weight computed using a quandle $2$-cocycle. 
 
In this paper, we study quandle coloring quivers using dihedral quandles. 
We show that, when we use a dihedral quandle of prime order,  
the quandle coloring quivers are equivalent to the quandle coloring numbers
 (Theorem \ref{p-dihed}). The notions of a quandle coloring quiver and a quandle cocyle quiver are naturally generalized to a {\it shadow quandle coloring quiver} and a {\it shadow quandle cocycle quiver}.  
We show that, when we use a dihedral quandle of prime order and {\it Mochizuki's $3$-cocycle}, the shadow quandle cocycle quivers are equivalent to  the shadow quandle cocycle invariants (Theorem \ref{cocycle-quiver}).
  
This paper is organaized as follows.
 In Section \ref{Quandles}, we recall the definition of a quandle, a quandle coloring and a shadow quandle cocycle invariant.
 In Section \ref{quivers-prime}, we recall the definition of the quandle coloring quiver of a link, and discuss quandle coloring quivers using a  dihedral quandle of prime order.
In Section \ref{quivers-composite}, we discuss quandle coloring quivers of a  dihedral quandle of composite order.
In Section \ref{shadow quiver}, we introduce the notion of a shadow quandle coloring quiver and a shadow quandle cocycle quiver. 
In Section \ref{shadow quiver-Mochizuki}, we discuss shadow quandle cocycle quivers using a dihedral quandle of prime order and Mochizuki's $3$-cocycle.

\section{Quandles and quandle cocycle invariants}
\label{Quandles}

\subsection{Quandles}
A {\it quandle} is a set $X$ with a binary operation $\ast:X\times X\to X$ satisfying the following three axioms.
\vspace{-0.5\baselineskip}
\begin{itemize}
	\setlength{\itemsep}{0pt}
	\setlength{\parskip}{0pt}
\item[(Q$1$)] For any $x\in X$, we have $x\ast x=x$.
\item[(Q$2$)] For any $y\in X$, the map $\ast y:X\to X$, $x\mapsto x\ast y$ is a bijection.
\item[(Q$3$)] For any $x,y,z\in X$, we have $(x\ast y)\ast z=(x\ast z)\ast(y\ast z)$.
\end{itemize}
\vspace{-0.5\baselineskip}
These axioms correspond to the three kinds of Reidemeister moves (cf. \cite{Joy, Mat}). 

\begin{exam}
The {\it dihedral quandle} of order $n$, denoted by $R_n$, is $\Z_n = \Z / n \Z$ with an operation $\ast$ defined by $x\ast y=2y-x$. 
\end{exam}

\begin{exam}
Let $M$ be a $\Z[t^{\pm 1}]$-module. We define an operation by $x\ast y=tx+(1-t)y$. Then, $M$ is a quandle, which is called an {\it Alexander quandle}.
\end{exam}

A map $f:X\to Y$ between quandles is called a 
 ({\it quandle}) {\it homomorphism} if $f(x\ast y)=f(x)\ast f(y)$ for any $x,y\in X$. 
A quandle homomorphism  $f:X\to Y$ is 
a {\it quandle isomorphism}, a {\it quandle endomorphism}, or a 
{\it quandle automorphism} if it is a bijection, if $X=Y$, or if $X=Y$ and it is a bijection, respectively.  
We denote by Hom$(X,Y)$ the set of quandle homomorphisms from $X$ to $Y$, by End$(X)$ the set of quandle endomorphisms of $X$ and by Aut$(X)$ the set of quandle automorphisms of $X$. 

\subsection{Quandle colorings and shadow cocycle invariants}
Let $X$ be a quandle and  $D$ be an oriented link diagram on $\R^{2}$. We denote the set of the arcs of $D$ by Arc$(D)$. A map $c:{\rm Arc}(D)\to X$ is an $X$-{\it coloring} if $c$ satisfies following condition at every crossing of $D$.
\vspace{-0.5\baselineskip}
\begin{itemize}
	\setlength{\itemsep}{0pt}
	\setlength{\parskip}{0pt}
\item Let $x_i,x_j,x_k$ be arcs around a crossing as in Figure \ref{coloringcondition} (left). Then, $c(x_i)\ast c(x_j)=c(x_k)$.
\end{itemize}
\vspace{-0.5\baselineskip}
The value  $c(a)$ assigned to arc $a$ is called the {\it label}. An $X$-coloring $c$ is a {\it trivial} coloring if c is a constant map.  We denote by ${\rm Col}_{X}(D)$ the set of $X$-colorings of $D$. 

 \begin{figure}[H]
  	\begin{center}
  	\includegraphics[height=3cm,width=8cm]{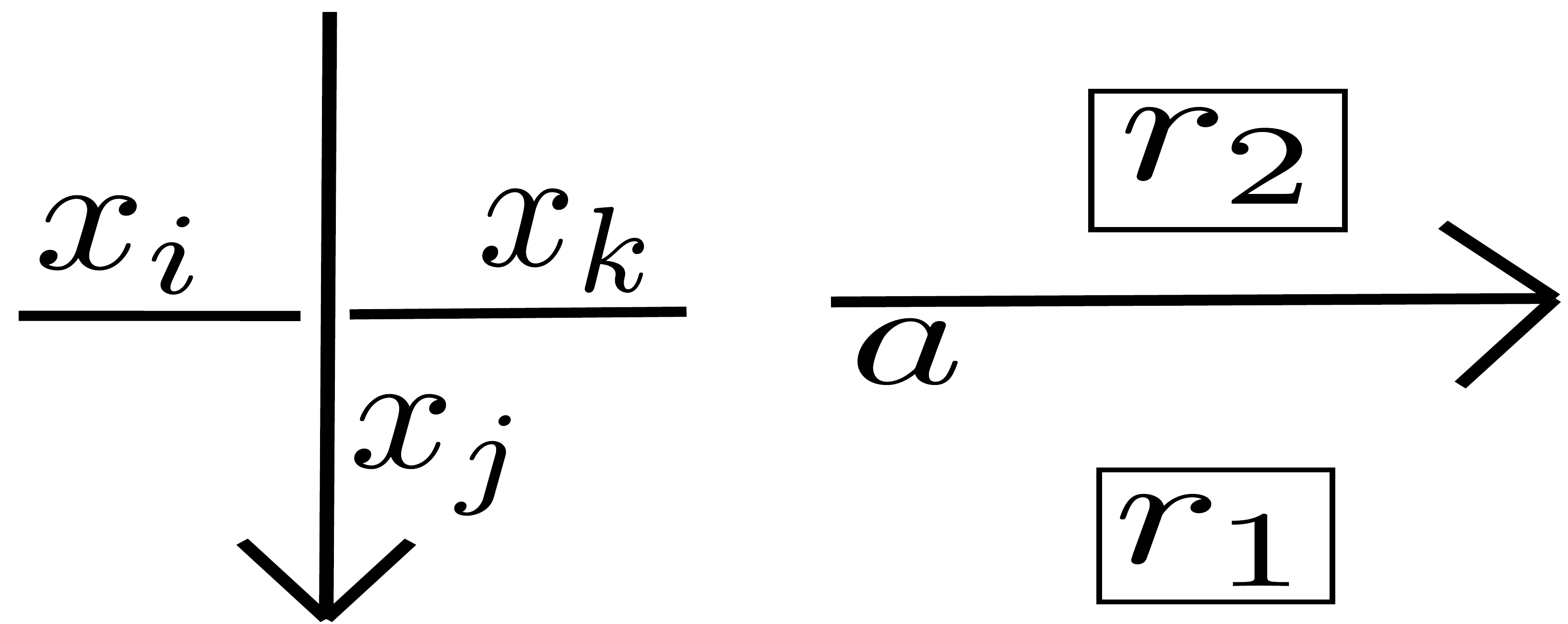}
  	\caption{coloring condition.}
  	\label{coloringcondition}
  	\end{center}
  \end{figure}

If two link diagrams $D$ and $D^{\prime}$ are related by Reidemeister moves, we have a bijection between Col$_{X}(D)$ and Col$_{X}(D^{\prime})$. Thus, when $X$ is a finite quandle, the cardinal number $| {\rm Col}_{X}(D)|$ of ${\rm Col}_{X}(D)$
 is a link invariant. It is called the $X$-{\it coloring number} or the {\it coloring number} by $X$.
 
 Let $|D|$ be immersed circles in $\R^{2}$ obtained from $D$ by ignoring over/under information of the crossings. We denote by ${\rm Region}(D)$ the set of connected component of $\R^{2}\backslash |D|$ and by $r_{\infty}$ the unbounded region of Region$(D)$. A map $c_{\ast}:{\rm Arc}(D)\cup{\rm Region}(D)\to X$ is a {\it shadow $X$-coloring} if $c_{\ast}$ satisfies the following conditions.
 \vspace{-0.5\baselineskip}
\begin{itemize}
	\setlength{\itemsep}{0pt}
	\setlength{\parskip}{0pt}
\item $c_{\ast}|_{{\rm Arc}(D)}$ is an $X$-coloring.
\item Let $r_1$ and $r_2$ be adjacent regions of $D$ along an arc $a$ as in Figure \ref{coloringcondition} (right). Then, $c_{\ast}(r_1)\ast c_{\ast}(a)=c_{\ast}(r_2)$.
\end{itemize}
\vspace{-0.5\baselineskip}
We denote by ${\rm SCol}_{X}(D)$ the set of shadow $X$-colorings of $D$ and by ${\rm SCol}_{X}(D,a)$ the set of shadow $X$-colorings of $D$ which satisfy $c_{\ast}(r_{\infty})=a$ for $a\in X$. 

Let $A$ be an abelian group. A map $\theta:X^3\to A$ is called a {\it quandle $3$-cocycle} if $\theta$ satisfies the following conditions (cf. \cite{CJKLS}).
\vspace{-0.5\baselineskip}
\begin{itemize}
	\setlength{\itemsep}{0pt}
	\setlength{\parskip}{0pt}
\item For any $x,y\in X$, $\theta(x,x,y)=\theta(x,y,y)=0$.
\item For any $x,y,z,w\in X$, $\theta(x,y,z)+\theta(x\ast z,y\ast z,w)+\theta(x,z,w)=\theta(x\ast y,z,w)+\theta(x,y,w)+\theta(x\ast w,y\ast w,z\ast w)$.
\end{itemize}
\vspace{-0.5\baselineskip}

\begin{exam}(\cite{Moc2003, Moc2011})
Let $p$ be an odd prime. We define a map $\theta_p:(R_p)^3\to\Z_p$ by
\[
\theta_p(x,y,z)=(x-y)\frac{(y^{p}+(2y-z)^p-2z^p)}{p}.
\]
Then, $\theta_p$ is a quandle $3$-cocycle, which is referred to as {\it Mochizuki's $3$-cocycle} in this paper.
\end{exam}

Let $\theta:X^3\to A$ be a quandle $3$-cocycle of a quandle $X$ and  $D$ be an oriented link diagram. For a shadow $X$-coloring $c_{\ast}$ of $D$, we associated a weight $\pm\theta(x,y,z)$ to each crossing of $D$, where $x \in X$ is the label of the region and $y, z \in X$ are labels of the arcs indicated in Figure \ref{cocyclecondition}. 
Then, we sum up the weights all over the crossings of $D$ to obtain an element of $A$ denoted by $\Phi_{\theta}(D,c_{\ast})$. 

 \begin{figure}[H]
  	\begin{center}
  	\includegraphics[height=3.5cm,width=8cm]{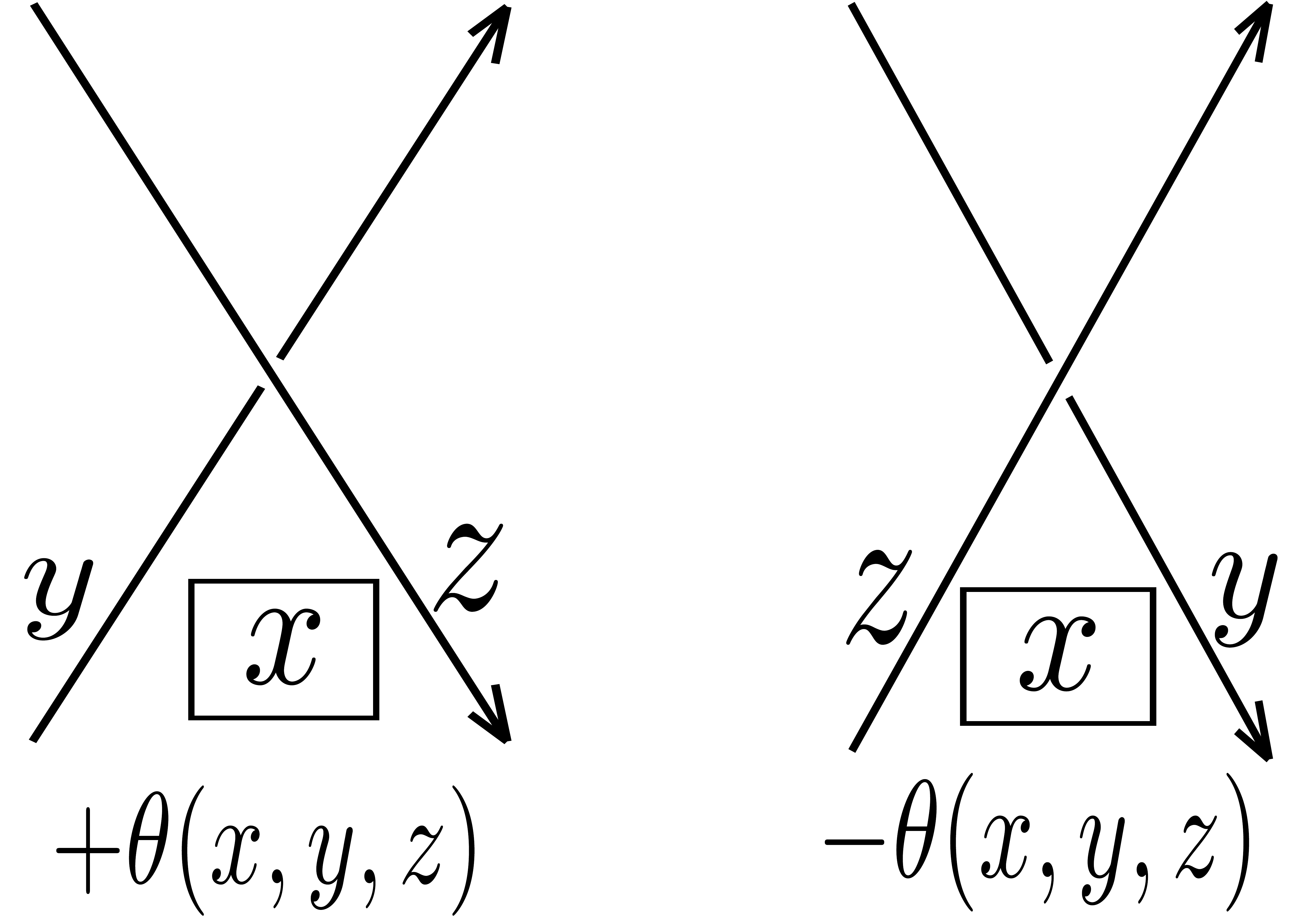}
  	\caption{The weight of a crossing.}
  	\label{cocyclecondition}
  	\end{center}
  \end{figure}
  
When $X$ is a finite quandle, $\Phi_{\theta}(D)=\{\Phi_{\theta}(D,c_{\ast})\mid c_{\ast}\in{\rm SCol}_{X}(D)\}$ as a multiset is a link invariant, which we call a {\it shadow quandle cocycle invariant}  (cf. \cite{CKS}).

\section{Quandle coloring quivers using a dihedral quandle of prime order}
\label{quivers-prime}

Let $X$ be a finite quandle and $D$ be an oriented link diagram. For any subset $S\subset{\rm End}(X)$, the {\it quandle coloring quiver} of $D$, which is denoted by $Q^{S}_{X}(D)$, is the quiver with a vertex for each $X$-coloring $c \in{\rm Col}_{X}(D)$ and an edge directed from $v$ to $w$ when $w=f\circ v$ for an element $f \in S$. 
 In \cite{Cho2019quiver}, it is proved that $Q^{S}_{X}(D)$ is a link invariant. Precisely speaking, if two link diagrams $D$ and $D^{\prime}$ are related by Reidemeister moves, then the  quandle coloring quivers $Q^{S}_{X}(D)$ and $Q^{S}_{X}(D^{\prime})$ are isomorphic as quivers for any $S\subset$ End$(X)$.

\begin{remark}
Note that the quandle coloring number of $D$ by using a finite quandle $X$ is $| {\rm Col}_{X}(D) |$, which is the number of vertices of $Q^{S}_{X}(D)$.  Thus, the quandle coloring quiver $Q^{S}_{X}(D)$ is in general a stronger link invariant than the quandle coloring number. For example, both of the knots $8_{10}$ and $8_{18}$ have the same quandle coloring number by using the dihedral quandle $R_9$ of order $9$, which is $81$.  On the other hand, their quandle coloring quivers using $R_9$ with $S= {\rm End}(R_9)$ are not isomorphic as quivers (Example $11$ of \cite{Cho2019quiver}).  
\end{remark}

\begin{lemm}
\label{alpha}
Let $X$ be a finite quandle and let $D$ and $D^{\prime}$ be oriented link diagrams. Assume that there exists a bijection $\varphi:{\rm Col}_{X}(D)\to{\rm Col}_{X}(D^{\prime})$ which satisfies the following condition, referred to as the condition $(\alpha)$, 
\begin{itemize}
\item For any $f\in{\rm End}(X)$ and $c\in{\rm Col}_{X}(D)$, 
$f\circ\varphi(c)=\varphi(f\circ c)$. 
\end{itemize}
Then the quandle coloring quivers $Q^{S}_{X}(D)$ and $Q^{S}_{X}(D^{\prime})$ are isomorphic for any $S\subset{\rm End}(X)$. 
\end{lemm}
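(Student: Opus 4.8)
The plan is to show that the given bijection $\varphi$ is itself a quiver isomorphism from $Q^{S}_{X}(D)$ to $Q^{S}_{X}(D^{\prime})$. Since the vertex sets of these quivers are exactly ${\rm Col}_{X}(D)$ and ${\rm Col}_{X}(D^{\prime})$, and $\varphi$ is already a bijection between them, the only thing left to verify is that $\varphi$ carries the edge set of $Q^{S}_{X}(D)$ bijectively onto the edge set of $Q^{S}_{X}(D^{\prime})$, respecting the direction of the edges (and, if one regards the quivers as multigraphs with edges labeled by elements of $S$, respecting these labels as well).

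First I would fix $S\subset{\rm End}(X)$ and recall that, by definition, there is an edge of $Q^{S}_{X}(D)$ directed from $v$ to $w$ precisely when $w=f\circ v$ for some $f\in S$. Given such an edge, condition $(\alpha)$ gives $\varphi(w)=\varphi(f\circ v)=f\circ\varphi(v)$, so there is an edge of $Q^{S}_{X}(D^{\prime})$ directed from $\varphi(v)$ to $\varphi(w)$, realized by the same element $f\in S$. Hence $\varphi$ sends edges to edges, compatibly with the labeling by $S$.

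For the converse I would use that $\varphi$ is a bijection. Let $v^{\prime},w^{\prime}\in{\rm Col}_{X}(D^{\prime})$ be joined by an edge directed from $v^{\prime}$ to $w^{\prime}$, say $w^{\prime}=f\circ v^{\prime}$ with $f\in S$, and write $v^{\prime}=\varphi(v)$ and $w^{\prime}=\varphi(w)$ for the unique $v,w\in{\rm Col}_{X}(D)$. Then $\varphi(w)=f\circ\varphi(v)=\varphi(f\circ v)$ by $(\alpha)$, and injectivity of $\varphi$ yields $w=f\circ v$, so there is an edge of $Q^{S}_{X}(D)$ directed from $v$ to $w$, again realized by the same $f$. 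Combining the two directions, $\varphi$ induces a bijection between the edge sets that commutes with the source and target maps (and with the labeling by $S$), which is exactly an isomorphism of quivers $Q^{S}_{X}(D)\cong Q^{S}_{X}(D^{\prime})$.

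There is no genuine obstacle here: the statement is a formal consequence of the equivariance built into condition $(\alpha)$. The only point that deserves a word of care is the bookkeeping of edges when several distinct $f\in S$ induce the same ordered pair $(v,w)$; since the correspondence constructed above keeps track of the element $f$ realizing each edge, it matches edges with the correct multiplicity, so the conclusion holds whether the quandle coloring quiver is read as a simple directed graph with loops or as an $S$-labeled multigraph.
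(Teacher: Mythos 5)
Your proof is correct and follows essentially the same route as the paper: use condition $(\alpha)$ to show that $\varphi$ sends edges to edges, and use bijectivity of $\varphi$ for the converse direction. Your write-up is somewhat more careful than the paper's (spelling out the injectivity argument for the reverse implication and the bookkeeping of edge multiplicities), but the underlying argument is identical.
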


\begin{proof}
If $(v,w)$ is an edge of $Q^{S}_{X}(D)$, we have
\begin{eqnarray*}
\varphi(w)&=&\varphi(f\circ v)\\
&=&f\circ\varphi(v)
\end{eqnarray*}
for some $f\in S$. Then, $(\varphi(v),\varphi(w))$ is an edge of $Q^{S}_{X}(D^{\prime})$. As the same way, it holds that $(v,w)$ is an edge of $Q^{S}_{X}(D)$ if and only if $(\varphi(v),\varphi(w))$ is an edge of $Q^{S}_{X}(D^{\prime})$. Therefore, $\varphi$ is an isomorphism between $Q^{S}_{X}(D)$ and $Q^{S}_{X}(D^{\prime})$.
\end{proof}

The following result is one of the main results of this paper.

\begin{theo}
\label{p-dihed}
Let $D$ and $D^{\prime}$ be oriented link diagrams and  $p$ be a prime. 
For any $S\subset{\rm End}(R_{p})$, the quandle coloring quivers $Q^{S}_{R_p}(D)$ and $Q^{S}_{R_p}(D^{\prime})$ are isomorphic if and only if 
$|{\rm Col}_{R_p}(D)|=|{\rm Col}_{R_p}(D^{\prime})|$. 
\end{theo}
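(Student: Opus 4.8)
The forward direction is immediate: if $Q^{S}_{R_p}(D)$ and $Q^{S}_{R_p}(D^{\prime})$ are isomorphic as quivers, then in particular they have the same number of vertices, and by the Remark the number of vertices is exactly $|{\rm Col}_{R_p}(D)|$ (resp. $|{\rm Col}_{R_p}(D^{\prime})|$). So the content is the converse: assuming $|{\rm Col}_{R_p}(D)|=|{\rm Col}_{R_p}(D^{\prime})|$, I want to produce a bijection $\varphi:{\rm Col}_{R_p}(D)\to{\rm Col}_{R_p}(D^{\prime})$ satisfying condition $(\alpha)$ of Lemma \ref{alpha}, and then invoke that lemma. The plan is to understand the $R_p$-colorings and the endomorphisms of $R_p$ explicitly enough to build such a $\varphi$ by hand.

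The key structural facts I would establish first are: (1) for a prime $p$, every $R_p$-coloring of a diagram $D$ is either trivial (constant, and there are exactly $p$ of these) or else "fully nontrivial," and the nontrivial colorings split into orbits under the affine group acting arc-wise; more precisely, ${\rm Col}_{R_p}(D)$ carries a natural free action of $\Z_p$ (translation: $c\mapsto c+k$, using that $R_p$ is the Alexander quandle $\Z_p$ with $t=-1$, so adding a constant preserves colorings) together with a $\Z_p^{\times}$-scaling action $c\mapsto \lambda c$, and the nontrivial colorings form a union of free $(\Z_p\rtimes\Z_p^{\times})$-orbits. Hence $|{\rm Col}_{R_p}(D)| = p + (p^2-p)\cdot m_D$ for some nonnegative integer $m_D$ (the number of such orbits), so the hypothesis $|{\rm Col}_{R_p}(D)|=|{\rm Col}_{R_p}(D^{\prime})|$ forces $m_D = m_{D'}$. (2) ${\rm End}(R_p)$ is precisely the set of affine maps $x\mapsto \lambda x + \mu$ with $\lambda\in\Z_p$, $\mu\in\Z_p$ (one checks directly that $x\mapsto \lambda x+\mu$ is a quandle endomorphism of $\Z_p$ with $x\ast y = 2y-x$ for every $\lambda,\mu$, and conversely any endomorphism of a connected quandle of prime order has this form — $\lambda=0$ gives the constant maps, $\lambda\neq 0$ the automorphisms). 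So post-composing a coloring $c$ with $f\in{\rm End}(R_p)$ is exactly $c\mapsto \lambda c + \mu$, i.e. the action of the monoid $\Z_p \rtimes (\Z_p,\cdot)$ on colorings.

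With this in place, condition $(\alpha)$ says: $\varphi$ must be equivariant for the full monoid action $c\mapsto \lambda c+\mu$. Note this monoid action has exactly two orbit types on ${\rm Col}_{R_p}(D)$: the $p$ trivial colorings form a single orbit (on which $\mu$ acts transitively and $\lambda$ collapses them toward $\lambda=0$), isomorphic as a monoid-set to ${\rm Col}_{R_p}$ of the unknot; and each of the $m_D$ families of nontrivial colorings is a copy of the monoid acting on $\Z_p\rtimes\Z_p^{\times}$ by left multiplication (with the $\lambda=0$ elements of the monoid sending everything into... — here one must be slightly careful: applying $f$ with $\lambda=0$ to a nontrivial coloring yields a \emph{trivial} coloring $\mu$, so the orbit structure is not a disjoint union but a rooted configuration where all $m_D$ nontrivial families feed into the single trivial family). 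The right way to say it: ${\rm Col}_{R_p}(D)$ as a monoid-set is isomorphic to the disjoint "collapse-glued" sum of one copy of $T$ (the trivial-coloring monoid-set) and $m_D$ copies of $N$ (the standard nontrivial monoid-set, which comes with a fixed monoid-equivariant collapse map $N\to T$). Since the isomorphism type of this combined monoid-set depends only on $m_D$, and $m_D = m_{D'}$, I get a monoid-set isomorphism $\varphi:{\rm Col}_{R_p}(D)\to{\rm Col}_{R_p}(D^{\prime})$, which is precisely a bijection satisfying $(\alpha)$. Lemma \ref{alpha} then finishes the proof.

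The main obstacle I anticipate is step (1) — nailing down that nontrivial $R_p$-colorings genuinely form free $\Z_p\rtimes\Z_p^{\times}$-orbits, i.e. that no nonconstant coloring is fixed by a nontrivial translation or a nontrivial scaling, and that scaling a nontrivial coloring never produces a constant one. Freeness under translation is clear (a fixed point of $c\mapsto c+k$ with $k\neq0$ forces $c$ constant), but I should double check that a nontrivial coloring cannot be invariant under $c \mapsto \lambda c$ for some $\lambda\neq 1$; this follows because such invariance would force every label to lie in a proper subgroup/coset structure of $\Z_p$, impossible for $p$ prime unless $c$ is constant. Handling the "$\lambda = 0$ collapse" cleanly in the monoid-set language — so that the counting identity $|{\rm Col}| = p + (p^2-p)m_D$ and the equivariance bookkeeping line up — is the part that needs the most care, but it is entirely elementary once the orbit picture is pinned down.
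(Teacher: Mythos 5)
Your proposal is correct, but it takes a genuinely different route from the paper. The paper's proof is linear-algebraic and very short: ${\rm Col}_{R_p}(D)$ and ${\rm Col}_{R_p}(D')$ are $\Z_p$-vector spaces of equal dimension, and since every $f\in{\rm End}(R_p)$ acts on colorings by $f\circ c = ac + bc_1$ (with $c_1$ the constant-$1$ coloring), \emph{any} $\Z_p$-linear isomorphism $\varphi$ with $\varphi(c_1)=c_1'$ automatically satisfies condition $(\alpha)$ --- no orbit analysis is needed. You instead decompose the coloring sets into the $p$ trivial colorings plus free orbits of the affine group $\Z_p\rtimes\Z_p^{\times}$, derive the counting identity $|{\rm Col}_{R_p}(D)| = p + (p^2-p)m_D$ to conclude $m_D=m_{D'}$, and match orbits equivariantly, taking care that the constant endomorphisms ($\lambda=0$) collapse every orbit onto the trivial colorings compatibly. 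All of this is sound: freeness holds because $\lambda c+\mu=c$ with $\lambda\neq 1$ forces $c$ constant (your ``proper subgroup'' phrasing is vaguer than needed; $(\lambda-1)c=-\mu$ with $\lambda-1$ invertible does it directly), and the collapse is orbit-independent, so the monoid-set isomorphism type depends only on $m_D$. What your approach buys is an explicit picture of the quiver's orbit structure --- which is essentially the strategy the paper itself uses later for Theorem \ref{cocycle-quiver}, where the weights $\Phi_{\theta_p}$ obstruct a purely linear choice of $\varphi$ and one must match ${\rm Aut}(R_p)$-orbits one at a time. What the paper's approach buys is brevity: the vector-space argument dispenses with freeness, counting, and the $\lambda=0$ bookkeeping in a few lines.
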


\begin{lemm}
\label{end-dihed}
Let $n$ be a positive integer greater than $1$. If $f:R_n\to R_n$ is a quandle homomorphism, then there exist unique elements $a,b\in \Z_n$ such that $f(x)=ax+b$ for any $x\in X$. 
\end{lemm}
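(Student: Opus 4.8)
The plan is to convert the homomorphism condition into a functional equation on $\Z_n$ and solve it by a short induction. Since the operation on $R_n$ is $x\ast y=2y-x$, a map $f\colon R_n\to R_n$ is a quandle homomorphism precisely when
\[
f(2y-x)=2f(y)-f(x)\qquad\text{for all }x,y\in\Z_n .
\]
Set $b:=f(0)$ and define $g\colon\Z_n\to\Z_n$ by $g(x):=f(x)-b$. Substituting $f=g+b$ into the displayed identity and simplifying shows that $g$ obeys the same equation $g(2y-x)=2g(y)-g(x)$ and moreover $g(0)=0$; it therefore suffices to prove that $g$ has the form $g(x)=ax$, since then $f(x)=g(x)+b=ax+b$.

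Next I would specialize the equation for $g$ to $(x,y)=(k-1,k)$, which gives $2y-x=k+1$ and hence the three-term recurrence $g(k+1)=2g(k)-g(k-1)$ valid for every $k\in\Z_n$. Writing $a:=g(1)=f(1)-f(0)$, the initial values $g(0)=0$ and $g(1)=a$ together with this recurrence yield $g(k)=ak$ for $k=0,1,\dots,n-1$ by induction; since these representatives exhaust $\Z_n$, we get $g(x)=ax$ for all $x\in\Z_n$. (Because $g$ is defined on $\Z_n$ from the outset there is no wrap-around issue; consistently, $g(\overline{n})=g(\overline{0})=0=a\overline{n}$.)

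Finally, uniqueness is immediate: if $ax+b=a'x+b'$ for all $x\in\Z_n$, then $x=0$ gives $b=b'$ and then $x=1$ gives $a=a'$, where the hypothesis $n>1$ is used to ensure $0\neq 1$ in $\Z_n$ (for $n=1$ the uniqueness claim genuinely fails, which is why this case is excluded). I do not foresee a real obstacle; the one point that needs a moment's thought is picking the substitution $(x,y)=(k-1,k)$ that produces the linear recurrence, after which the argument is a routine induction followed by a trivial uniqueness check.
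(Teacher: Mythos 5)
Your proof is correct and follows essentially the same route as the paper: both arguments hinge on the identity $(k-1)\ast k=k+1$ in $R_n$, which turns the homomorphism condition into the recurrence $f(k+1)=2f(k)-f(k-1)$, and then an induction from the values at $0$ and $1$ pins down $f(x)=ax+b$, with uniqueness read off at $x=0,1$. (Only a cosmetic quibble: for $n=1$ uniqueness does not actually fail --- $\Z_1$ has a single element --- but that case is excluded anyway.)
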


\begin{proof}
Put $a=f(1)-f(0)$ and $b=f(0)$. We define a map $g:R_n\to R_n$ by $g(x)=ax+b$. It is seen by a directed calculation that $g$ is a quandle homomorphism. By definition of $g$, we have $f(0)=g(0)$ and $f(1)=g(1)$. Recall that $(m-1)\ast m=m+1$ for any $m\in R_n$, and we have $g(m)=f(m)$ for any $m\in R_n$. The uniqueness of $a$ and $b$ is seen by evaluating $f$ with $0$ and $1$.  
\end{proof}

\begin{remark}
\label{endo-auto}
If $f$ is a quandle automorphism of $R_n$, then $a$ is an element of $\Z^{\times}_n$. When $p$ is a prime, we have ${\rm End}(R_p)={\rm Aut}(R_p)\cup\{{\rm constant\ maps}\}$.
\end{remark}

\begin{proof}[Proof of Theorem \ref{p-dihed}]
The only if part is trivial. We consider the if part.

We show that there exists a bijection $\varphi:{\rm Col}_{R_p}(D)\to{\rm Col}_{R_p}(D^{\prime})$ which satisfies the condition $(\alpha)$ in Lemma~\ref{alpha}.

Recall that ${\rm Col}_{R_p}(D)$ and ${\rm Col}_{R_p}(D^{\prime})$ are 
$\Z_p$-vector spaces. By assumption, they have the same cardinality and hence their dimensions are the same, say $n$.  Take a basis $\{c_1,\dots,c_n\}$ of Col$_{R_p}(D)$ and a basis $\{c^{\prime}_1,\dots,c^{\prime}_n\}$ of Col$_{R_p}(D^{\prime})$ such that $c_1$ and $c^{\prime}_1$ are the trivial coloring which is the constant map onto $1$. 
Let $f$ be a quandle endomorphism of $R_p$. By Lemma~\ref{end-dihed}, we have $f(x)=ax+b$ for some $a,b\in\Z_p$. Then, 
\vspace{-0.5\baselineskip}
\begin{eqnarray*}
f\circ c(x)&=&f(c(x))\\
&=&ac(x)+b\\
&=&ac(x)+bc_1(x)
\end{eqnarray*}
for any $c\in{\rm Col}_{R_p}(D)$ and $x\in{\rm Arc}(D)$. Therefore, we have $f\circ c=ac+bc_1$ for any $c\in{\rm Col}_{R_p}(D)$. Similarly, we can show that $f\circ c^{\prime}=ac^{\prime}+bc^{\prime}_1$ for any $c^{\prime}\in{\rm Col}_{R_p}(D^{\prime})$.

Since ${\rm Col}_{R_p}(D)$ and ${\rm Col}_{R_p}(D^{\prime})$ are 
$\Z_p$-vector spaces of the same dimension $n$, there is a $\Z_p$-linear isomorphism  $\varphi:{\rm Col}_{R_p}(D)\to{\rm Col}_{R_p}(D^{\prime})$ such that $\varphi(c_1)=c^{\prime}_1$. Then,
\vspace{-0.5\baselineskip}
\begin{eqnarray*}
\varphi(f\circ c)&=&\varphi(ac+bc_1)\\
&=&a\varphi(c)+b\varphi(c_1)\\
&=&a\varphi(c)+bc^{\prime}_1\\
&=&f\circ\varphi(c).
\end{eqnarray*} 
Therefore, $\varphi$ satisfies the condition $(\alpha)$.
\end{proof}

\section{Quandle coloring quivers using a dihedral quandle of composite order}
\label{quivers-composite}

We discuss quandle coloring quivers using a dihedral quandle of composite order. 

\begin{theo}
\label{coprime-dihed}
Let $D$ and $D^{\prime}$ be oriented link diagrams and let $m$ and $n$ be coprime integers greater than $1$. Suppose that there exists bijections $\varphi_m:{\rm Col}_{R_m}(D)\to{\rm Col}_{R_m}(D^{\prime})$ and $\varphi_n:{\rm Col}_{R_n}(D)\to{\rm Col}_{R_n}(D^{\prime})$ which satisfy the condition $(\alpha)$. Then $Q^{S}_{R_{mn}}(D)$ and $Q^{S}_{R_{mn}}(D^{\prime})$ are isomorphic for any $S\subset{\rm End}(R_{mn})$.
\end{theo}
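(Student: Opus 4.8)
The plan is to reduce the statement to the $R_m$ and $R_n$ situations via the Chinese Remainder Theorem and then invoke Lemma~\ref{alpha}. First I would record the quandle isomorphism
\[
\Psi\colon R_{mn}\xrightarrow{\ \sim\ } R_m\times R_n,\qquad x\longmapsto (x\bmod m,\ x\bmod n),
\]
where $R_m\times R_n$ carries the product quandle operation. Since $m$ and $n$ are coprime, $\Psi$ is the usual ring isomorphism $\Z_{mn}\cong\Z_m\times\Z_n$, and because the dihedral operation $x\ast y=2y-x$ is given by a ring expression evaluated coordinatewise, $\Psi$ is a quandle isomorphism. Composing colorings with $\Psi$ then identifies ${\rm Col}_{R_{mn}}(D)$ with ${\rm Col}_{R_m\times R_n}(D)$, and the latter is canonically ${\rm Col}_{R_m}(D)\times{\rm Col}_{R_n}(D)$: the coloring condition $c(x_i)\ast c(x_j)=c(x_k)$ holds for a map into $R_m\times R_n$ if and only if it holds in each coordinate. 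Thus every $c\in{\rm Col}_{R_{mn}}(D)$ corresponds to a pair $(c_m,c_n)$ with $c_m\in{\rm Col}_{R_m}(D)$, $c_n\in{\rm Col}_{R_n}(D)$, and likewise for $D^{\prime}$.

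Next I would decompose endomorphisms. By Lemma~\ref{end-dihed}, any $f\in{\rm End}(R_{mn})$ has the form $f(x)=ax+b$ with $a,b\in\Z_{mn}$; writing $a=(a_m,a_n)$ and $b=(b_m,b_n)$ under $\Z_{mn}\cong\Z_m\times\Z_n$ and using the fact (verified in the proof of Lemma~\ref{end-dihed}) that every map $x\mapsto a'x+b'$ on a dihedral quandle is a quandle endomorphism, we get $f_m\in{\rm End}(R_m)$, $f_n\in{\rm End}(R_n)$ with $f=f_m\times f_n$ under the identification $R_{mn}\cong R_m\times R_n$. Consequently, for $c=(c_m,c_n)\in{\rm Col}_{R_{mn}}(D)$ we have $f\circ c=(f_m\circ c_m,\ f_n\circ c_n)$, and similarly on the $D^{\prime}$ side.

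With these identifications in place I would define
\[
\varphi\colon {\rm Col}_{R_{mn}}(D)\longrightarrow {\rm Col}_{R_{mn}}(D^{\prime}),\qquad
\varphi(c_m,c_n)=\bigl(\varphi_m(c_m),\ \varphi_n(c_n)\bigr).
\]
This is a bijection because $\varphi_m$ and $\varphi_n$ are. To check the condition $(\alpha)$, take $f\in{\rm End}(R_{mn})$ with $f=f_m\times f_n$ as above; then
\[
f\circ\varphi(c_m,c_n)=\bigl(f_m\circ\varphi_m(c_m),\ f_n\circ\varphi_n(c_n)\bigr)
=\bigl(\varphi_m(f_m\circ c_m),\ \varphi_n(f_n\circ c_n)\bigr)
=\varphi(f\circ(c_m,c_n)),
\]
where the middle equality uses that $\varphi_m$ and $\varphi_n$ each satisfy $(\alpha)$. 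Hence $\varphi$ satisfies $(\alpha)$, and Lemma~\ref{alpha} gives $Q^{S}_{R_{mn}}(D)\cong Q^{S}_{R_{mn}}(D^{\prime})$ for every $S\subset{\rm End}(R_{mn})$.

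I do not expect a serious obstacle here: the argument is essentially bookkeeping organized around the CRT decomposition. The one point that needs care is the decomposition of endomorphisms $f=f_m\times f_n$ — i.e.\ that under $\Psi$ an endomorphism of $R_{mn}$ really does split as a product of endomorphisms of $R_m$ and $R_n$ — which is exactly where Lemma~\ref{end-dihed} (together with the coordinatewise nature of the affine formula $x\mapsto ax+b$) is used; everything else is a routine coordinatewise verification.
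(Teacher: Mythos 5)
Your proposal is correct and follows essentially the same route as the paper: CRT identification $R_{mn}\cong R_m\times R_n$, the induced splitting ${\rm Col}_{R_{mn}}(D)\cong{\rm Col}_{R_m}(D)\times{\rm Col}_{R_n}(D)$, the coordinatewise decomposition of endomorphisms via the affine form from Lemma~\ref{end-dihed}, and then $\varphi_m\times\varphi_n$ together with Lemma~\ref{alpha}. The paper merely packages the endomorphism decomposition as explicit maps $\psi_m,\psi_n\colon{\rm End}(R_{mn})\to{\rm End}(R_m),{\rm End}(R_n)$, which is the same bookkeeping you describe.
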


\begin{proof}
Since $m$ and $n$ are coprime, by Chinese remainder theorem, we have a bijection $R_{mn}\to R_m\times R_n$ by $[x]_{mn}\mapsto ([x]_m,[x]_n)$, where $[x]_{m}$ means an element of $R_m = \Z / m \Z$ represented by an integer $x$. It is not only a group isomorphism but also a quandle isomorphism.  Thus $R_{mn}$ can be identified with $R_m\times R_n$ as a quandle.  
On the other hand, we can naturally idenitfy ${\rm Col}_{R_{m}\times R_{n}}(D)$ with ${\rm Col}_{R_{m}}(D)\times{\rm Col}_{R_{n}}(D)$. Then, we have a natural bijection 
$$\psi:{\rm Col}_{R_{mn}}(D)\to{\rm Col}_{R_m}(D)\times{\rm Col}_{R_n}(D).$$  
And similarly we have a natural bijection 
$\psi^{\prime}:{\rm Col}_{R_{mn}}(D^{\prime})\to{\rm Col}_{R_m}(D^{\prime})\times{\rm Col}_{R_n}(D^{\prime})$.  

Let us define a map 
$$\psi_m:{\rm End}(R_{mn})\to{\rm End}(R_m)$$ 
by $\psi_m(f)([x]_m)=[a]_m[x]_m+[b]_m$ for any $ f \in {\rm End}(R_{mn})$, where 
$[a]_m, [b]_m \in R_m$ are images of  $[a]_{mn}, [b]_{mn} \in R_{mn}$ which are uniquely 
determined from $f$ by Lemma~\ref{end-dihed} with  
$f([x]_{mn})=[a]_{mn}[x]_{mn}+[b]_{mn}$ for $[x]_{mn} \in R_{mn}$. 
Similarly, we define a map $\psi_{n}:{\rm End}(R_{mn})\to{\rm End}(R_n)$. 
 By a direct calculation, we see that $\psi(f\circ c)=(\psi_{m}(f)\times\psi_{n}(f))\circ(\psi(c))$ and 
$\psi^{\prime}(f\circ c^{\prime})=(\psi_{m}(f)\times\psi_{n}(f))\circ(\psi(c^{\prime}))$ 
for any $f \in {\rm End}(R_{mn})$, $c \in {\rm Col}_{R_{mn}}(D)$ and $c^{\prime} \in {\rm Col}_{R_{mn}}(D^{\prime})$.

By assumption, there exists bijections $\varphi_m:{\rm Col}_{R_m}(D)\to{\rm Col}_{R_m}(D^{\prime})$ and $\varphi_n:{\rm Col}_{R_n}(D)\to{\rm Col}_{R_n}(D^{\prime})$  which satisfy the condition $(\alpha)$. 
Let $\varphi=\varphi_m\times\varphi_n:{\rm Col}_{R_{m}}(D)\times{\rm Col}_{R_{n}}(D)\to{\rm Col}_{R_{m}}(D^{\prime})\times{\rm Col}_{R_{n}}(D^{\prime})$ and define a bijection $\Psi$ by $\Psi=(\psi^{\prime})^{-1}\circ\varphi\circ\psi:{\rm Col}_{R_{mn}}(D)\to{\rm Col}_{R_{mn}}(D^{\prime})$. Then, we have
 \begin{eqnarray*}
 \Psi(f\circ c)&=&(\psi^{\prime})^{-1}\circ\varphi((\psi_{m}(f)\times\psi_{n}(f))\circ(\psi(c)))\\
 &=&(\psi^{\prime})^{-1}((\psi_{m}(f)\times\psi_{n}(f))\circ(\varphi\circ\psi(c)))\\
 &=&f\circ\Psi(c)
 \end{eqnarray*}
 for any $f\in{\rm End}(R_{mn})$ and $c\in{\rm Col}_{R_{mn}}(D)$. Therefore, $\Psi$ satisfies the condition $(\alpha)$, which implies by Lemma~\ref{alpha} the assertion.  
\end{proof}

\begin{cor}
\label{coprime-cor}
Let $D$ and $D^{\prime}$ be oriented link diagrams and $P=p^{e_1}_1p^{e_2}_2\dots p^{e_n}_n$ be the prime factorization of a positive integer $P$. If there exists a bijection $\varphi_{p^{e_i}_i}:{\rm Col}_{R_{p^{e_i}_i}}(D)\to{\rm Col}_{R_{p^{e_i}_i}}(D^{\prime})$ which satisfies the condition $(\alpha)$ for each $i$, then  $Q^{S}_{R_{P}}(D)$ and $Q^{S}_{R_{P}}(D^{\prime})$ are isomorphic for any $S\subset{\rm End}(R_{P})$. 
\end{cor}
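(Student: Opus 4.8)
The plan is to prove, by induction on the number $n\geq 1$ of distinct prime factors of $P$, the formally stronger assertion that there exists a bijection $\varphi_P\colon{\rm Col}_{R_P}(D)\to{\rm Col}_{R_P}(D^{\prime})$ satisfying the condition $(\alpha)$. Granting this, Lemma~\ref{alpha} immediately gives that $Q^{S}_{R_P}(D)$ and $Q^{S}_{R_P}(D^{\prime})$ are isomorphic for every $S\subset{\rm End}(R_P)$, which is the assertion of the corollary.

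The base case $n=1$ is exactly the hypothesis, since then $P=p_1^{e_1}$ and $\varphi_{p_1^{e_1}}$ is the desired bijection. For the inductive step, assume $n\geq 2$ and write $P=mq$ with $m=p_1^{e_1}\cdots p_{n-1}^{e_{n-1}}$ and $q=p_n^{e_n}$; these are integers greater than $1$, and they are coprime since they are built from disjoint sets of primes. By the induction hypothesis applied to $m$, there is a bijection $\varphi_m\colon{\rm Col}_{R_m}(D)\to{\rm Col}_{R_m}(D^{\prime})$ satisfying $(\alpha)$; by hypothesis there is a bijection $\varphi_q\colon{\rm Col}_{R_q}(D)\to{\rm Col}_{R_q}(D^{\prime})$ satisfying $(\alpha)$. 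I would then reread the proof of Theorem~\ref{coprime-dihed}: there it is shown not merely that the two quivers over $R_{mq}$ are isomorphic, but that the explicitly constructed bijection $\Psi=(\psi^{\prime})^{-1}\circ(\varphi_m\times\varphi_q)\circ\psi$ itself satisfies the condition $(\alpha)$. Taking $\varphi_P=\Psi$ therefore closes the induction.

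The one point that needs care is precisely this bookkeeping: Theorem~\ref{coprime-dihed} is stated with a conclusion about quiver isomorphism, which on its own cannot be iterated, so one must isolate from its proof the stronger output --- the existence of a $(\alpha)$-bijection --- and feed that back into the next step. Beyond this, nothing new is required: the facts that $R_m$ is again a dihedral quandle, that the Chinese remainder map realizes $R_{mq}$ as the quandle product $R_m\times R_q$, and that the induced factorization maps $\psi_m,\psi_q$ on endomorphism sets are compatible with composition, are all already established inside the proof of Theorem~\ref{coprime-dihed} and need only the coprimality of $m$ and $q$.
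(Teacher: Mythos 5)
Your proof is correct and follows essentially the same route as the paper: the paper also iterates the construction from the proof of Theorem~\ref{coprime-dihed}, combining the prime-power factors two at a time and using at each stage the stronger output that the constructed bijection satisfies the condition $(\alpha)$, before invoking Lemma~\ref{alpha} once at the end. Your explicit remark that the statement of Theorem~\ref{coprime-dihed} alone cannot be iterated, and that one must extract the $(\alpha)$-bijection from its proof, is exactly the point the paper's argument relies on.
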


\begin{proof}
By assumption, there exists bijections $\varphi_{p^{e_1}_1}:{\rm Col}_{R_{p^{e_1}_1}}(D)\to{\rm Col}_{R_{p^{e_1}_1}}(D^{\prime})$ and $\varphi_{p^{e_2}_2}:{\rm Col}_{R_{p^{e_2}_2}}(D)\to{\rm Col}_{R_{p^{e_2}_2}}(D^{\prime})$. As seen in the proof of Theorem \ref{coprime-dihed}, we have a bijection $\varphi_{p^{e_1}_1p^{e_2}_2}:{\rm Col}_{R_{p^{e_1}_1p^{e_2}_2}}(D)\to{\rm Col}_{R_{p^{e_1}_1p^{e_2}_2}}(D^{\prime})$ which satisfies the condition $(\alpha)$. Repeating this procedure, we have a bijection $\varphi_{P}:{\rm Col}_{R_P}(D)\to{\rm Col}_{R_P}(D^{\prime})$ which satisfies the condition $(\alpha)$, which implies by Lemma\ref{alpha} the assertion.
\end{proof}

\begin{theo}
\label{cor-dihed}
Let $D$ and $D^{\prime}$ be oriented link diagrams and  $P=p_1p_2\dots p_n$ be a positive integer for some distinct primes $p_1,\dots,p_n$. For any $S\subset{\rm End}(R_P)$, the quandle coloring quivers $Q^{S}_{R_P}(D)$ and $Q^{S}_{R_P}(D^{\prime})$ are isomorphic if and only if $|{\rm Col}_{R_P}(D)|=|{\rm Col}_{R_P}(D^{\prime})|$. 
\end{theo}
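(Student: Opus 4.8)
The plan is to reduce Theorem~\ref{cor-dihed} to Theorem~\ref{p-dihed} and Corollary~\ref{coprime-cor}. The ``only if'' direction is again trivial, since $|{\rm Col}_{R_P}(D)|$ is the number of vertices of $Q^{S}_{R_P}(D)$ and isomorphic quivers have the same number of vertices. So the whole content is the ``if'' direction.

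First I would observe that, because $P=p_1p_2\cdots p_n$ is squarefree, the Chinese Remainder Theorem gives a quandle isomorphism $R_P \cong R_{p_1}\times\cdots\times R_{p_n}$, and hence a natural bijection ${\rm Col}_{R_P}(D)\cong \prod_i {\rm Col}_{R_{p_i}}(D)$ (the same identification used in the proof of Theorem~\ref{coprime-dihed}). In particular $|{\rm Col}_{R_P}(D)| = \prod_i |{\rm Col}_{R_{p_i}}(D)|$, and likewise for $D'$. The key point I need is that the hypothesis $|{\rm Col}_{R_P}(D)|=|{\rm Col}_{R_P}(D')|$ forces $|{\rm Col}_{R_{p_i}}(D)|=|{\rm Col}_{R_{p_i}}(D')|$ for \emph{each} $i$ separately. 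This follows because ${\rm Col}_{R_{p_i}}(D)$ is an $\F_{p_i}$-vector space, so $|{\rm Col}_{R_{p_i}}(D)| = p_i^{d_i(D)}$ for some dimension $d_i(D)\ge 1$; thus the equality $\prod_i p_i^{d_i(D)} = \prod_i p_i^{d_i(D')}$ is an equality of integers whose prime factorizations must agree, and since the $p_i$ are distinct primes we get $d_i(D)=d_i(D')$ for every $i$, i.e. $|{\rm Col}_{R_{p_i}}(D)|=|{\rm Col}_{R_{p_i}}(D')|$.

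Next, for each $i$ I would invoke the construction inside the proof of Theorem~\ref{p-dihed}: from $|{\rm Col}_{R_{p_i}}(D)|=|{\rm Col}_{R_{p_i}}(D')|$ that proof produces (not merely the quiver isomorphism, but) an explicit bijection $\varphi_{p_i}:{\rm Col}_{R_{p_i}}(D)\to{\rm Col}_{R_{p_i}}(D')$ satisfying the condition $(\alpha)$ — namely an $\F_{p_i}$-linear isomorphism sending the constant-$1$ coloring to the constant-$1$ coloring. Feeding these $\varphi_{p_1},\dots,\varphi_{p_n}$ into Corollary~\ref{coprime-cor} (with $P=p_1 p_2\cdots p_n$, all exponents equal to $1$) immediately yields that $Q^{S}_{R_P}(D)$ and $Q^{S}_{R_P}(D')$ are isomorphic for any $S\subset{\rm End}(R_P)$, completing the ``if'' direction.

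The only step that requires any genuine argument rather than citation is the descent of the counting hypothesis from level $P$ to each prime level $p_i$, and that is the unique-factorization observation above; everything else is an application of results already established (Theorem~\ref{p-dihed}'s internal construction and Corollary~\ref{coprime-cor}). One small point to be careful about when writing it out: I should state explicitly that the proof of Theorem~\ref{p-dihed} in fact constructs a bijection satisfying $(\alpha)$, so that Corollary~\ref{coprime-cor} is applicable; this is visible from that proof but worth flagging so the logical dependency is clean.
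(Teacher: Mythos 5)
Your proposal is correct and follows essentially the same route as the paper's proof: identify ${\rm Col}_{R_P}(D)$ with $\prod_i {\rm Col}_{R_{p_i}}(D)$ via the Chinese Remainder Theorem, descend the counting hypothesis to each prime level, extract from the proof of Theorem~\ref{p-dihed} a bijection satisfying condition $(\alpha)$ for each $p_i$, and conclude via Corollary~\ref{coprime-cor}. The only difference is that you spell out the unique-factorization argument for why $|{\rm Col}_{R_P}(D)|=|{\rm Col}_{R_P}(D')|$ forces $|{\rm Col}_{R_{p_i}}(D)|=|{\rm Col}_{R_{p_i}}(D')|$ for each $i$, a step the paper asserts without justification.
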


\begin{proof}
The only if part is trivial. We show the if part. Suppose that $|{\rm Col}_{R_P}(D)|=|{\rm Col}_{R_P}(D^{\prime})|$.
As seen in the proof of Theorem \ref{coprime-dihed}, ${\rm Col}_{R_P}(D)$ is naturally identified with 
$\Pi^{n}_{i=1}{\rm Col}_{R_{p_i}}(D)$.  Since $|{\rm Col}_{R_P}(D)|=|{\rm Col}_{R_P}(D^{\prime})|$, we have 
 $|{\rm Col}_{R_{p_i}}(D)|=|{\rm Col}_{R_{p_i}}(D^{\prime})|$ for each $i$. 
 As seen in the proof of Theorem \ref{p-dihed}, we have a bijection $\varphi_{p_i}:{\rm Col}_{R_{p_i}}(D)\to{\rm Col}_{R_{p_i}}(D^{\prime})$ which satisfies the condition $(\alpha)$ for each $i$. By Corollary \ref{coprime-cor}, we have a bijection $\varphi_P:{\rm Col}_{R_P}(D)\to{\rm Col}_{R_P}(D^{\prime})$ which satisfies the condition $(\alpha)$, which implies by Lemma~\ref{alpha}
 the assertion.
\end{proof}

\section{Shadow versions of quandle quivers}
\label{shadow quiver}

In this section we define the {\it shadow quandle coloring quiver} and the {\it shadow quandle cocycle quiver}, which are shadow versions of the quandle coloring quiver and the quandle cocycle quiver defined by Cho and Nelson \cite{Cho2019quiver, Cho2019cocycle}.

\begin{defi}
Fix a finite quandle $X$, an element $a\in X$ and a subset $S\subset{\rm End}(X)$.    
The {\it shadow quandle coloring quiver} of an oriented link diagram $D$, which is denoted by $SQ^{S}_{X}(D,a)$, is the quiver with a vertex for each shadow $X$-coloring $c_{\ast}\in{\rm SCol}_{X}(D,a)$ and an edge directed from $v_{\ast}$ to $w_{\ast}$ when $w_{\ast}|_{{\rm Arc}(D)}=f\circ v_{\ast}|_{{\rm Arc}(D)}$ for an element $f\in S$.
\end{defi}

The shadow quandle coloring quiver is a link invariant.  However, it is nothing more than the quandle coloring quiver as seen below.

\begin{prop}
\label{shadow-quiver}
The quandle coloring quiver $Q^{S}_{X}(D)$ and the shadow quandle coloring quiver $SQ^{S}_{X}(D,a)$ are isomorphic for any $S\subset{\rm End}(X)$ and $a\in X$.
\end{prop}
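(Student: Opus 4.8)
\emph{Proof idea.} The plan is to exhibit the quiver isomorphism directly, namely the map that forgets the region labels of a shadow coloring. Define $\Phi\colon {\rm SCol}_{X}(D,a)\to{\rm Col}_{X}(D)$ by $\Phi(c_{\ast})=c_{\ast}|_{{\rm Arc}(D)}$; by the first bullet in the definition of a shadow $X$-coloring this does land in ${\rm Col}_{X}(D)$. I claim $\Phi$ is the vertex map of an isomorphism $SQ^{S}_{X}(D,a)\cong Q^{S}_{X}(D)$.

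The first step is to check that $\Phi$ is a bijection, i.e.\ that every $X$-coloring $c$ of $D$ extends in exactly one way to a shadow $X$-coloring $c_{\ast}$ with $c_{\ast}(r_{\infty})=a$. Given $c$, one defines the region labels by setting $c_{\ast}(r_{\infty})=a$ and propagating: for a region $r$, choose a path from $r_{\infty}$ to $r$ meeting $|D|$ transversally in arc-points, and apply the transition $c_{\ast}(r_{2})=c_{\ast}(r_{1})\ast c(b)$ (or its inverse, which exists by (Q$2$)) each time the path crosses an arc $b$. Uniqueness is immediate because the dual graph of $D$ (regions as vertices, arcs as edges) is connected, so every region label is forced once $c_{\ast}(r_{\infty})$ is fixed. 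For well-definedness one must see that the composite transition along any generic loop in $\R^{2}$ is the identity permutation of $X$; since $\R^{2}$ is simply connected this reduces, via a homotopy sweeping the loop across crossings, to the local identity at a single crossing, which is precisely axiom (Q$3$) (with (Q$2$) accounting for the over-arc being crossed twice). This is the classical argument for the existence of shadow colorings (cf.\ \cite{CKS2004}), and it is the only step that is not purely formal.

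The second step is the edge correspondence, which is immediate from the definitions. For $v_{\ast},w_{\ast}\in{\rm SCol}_{X}(D,a)$ there is an edge from $v_{\ast}$ to $w_{\ast}$ in $SQ^{S}_{X}(D,a)$ precisely when $w_{\ast}|_{{\rm Arc}(D)}=f\circ v_{\ast}|_{{\rm Arc}(D)}$ for some $f\in S$, that is, precisely when $\Phi(w_{\ast})=f\circ\Phi(v_{\ast})$ for some $f\in S$, which is exactly the condition for an edge from $\Phi(v_{\ast})$ to $\Phi(w_{\ast})$ in $Q^{S}_{X}(D)$. Hence $\Phi$ preserves and reflects edges, so it is an isomorphism of quivers. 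Since $a\in X$ and $S\subset{\rm End}(X)$ were arbitrary, the proposition follows. The main (in fact only) obstacle is the bijectivity claim of the first step; everything else is bookkeeping, and one might even relegate the well-definedness to a citation.
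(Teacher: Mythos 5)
Your proof is correct and follows essentially the same route as the paper: the paper also uses the bijection between ${\rm Col}_{X}(D)$ and ${\rm SCol}_{X}(D,a)$ given by unique extension of an arc coloring to a shadow coloring with $c_{\ast}(r_{\infty})=a$ (citing the literature for this fact, where you sketch the monodromy argument), and then observes that the edge conditions match by definition. No substantive difference.
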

\begin{proof}
For any $a\in X$ and $X$-coloring $c\in{\rm Col}_{X}(D)$, there is a unique shadow $X$-coloring $c_{\ast}$ such that $c_{\ast}|_{{\rm Arc}(D)}=c$ and $c_{\ast}(r_{\infty})=a$ (cf. \cite{CKS}). Hence, we have a natural bijection $\varphi:{\rm Col}_{X}(D)\to{\rm SCol}_{X}(D,a)$. By definition of the shadow quandle coloring quiver, we see that $\varphi$ is a quiver isomorphism between $Q^{S}_{X}(D)$ and $SQ^{S}_{X}(D,a)$ for any subset $S\subset{\rm End}(X)$.
\end{proof}

Next, we define the shadow quandle cocycle quiver.

\begin{defi}
Fix a finite quandle $X$, an element $a\in X$, a subset $S\subset{\rm End}(X)$,  
an abelian group $A$ and a quandle $3$-cocycle $\theta:X^3\to A$.  
The {\it shadow quandle cocycle quiver} of an oriented link diagram $D$, which is denoted by $SQ^{S,\theta}_{X}(D,a)$, is the pair $(SQ^{S}_{X}(D,a),\rho)$ of the shadow quandle coloring quiver $SQ^{S}_{X}(D,a)$ and a map $\rho:{\rm SCol}_{X}(D,a)\to A$ defined by $\rho(c_{\ast})=\Phi_{\theta}(D,c_{\ast})$.  
\end{defi}

\begin{defi}
Let $SQ^{S,\theta}_{X}(D,a)=(SQ^{S}_{X}(D,a),\rho)$ be the shadow quandle cocycle quiver of an oriented link diagram  $D$ 
and let $SQ^{S,\theta}_{X}(D^{\prime},a)=(SQ^{S}_{X}(D^{\prime},a),\rho^{\prime})$ be that of $D^{\prime}$. We say that $SQ^{S,\theta}_{X}(D,a)$ and $SQ^{S,\theta}_{X}(D^{\prime},a)$ are isomorphic if there is a bijection $\varphi:{\rm SCol}_{X}(D,a)\to{\rm SCol}_X(D^{\prime},a)$ which satisfies the following conditions.
\vspace{-0.5\baselineskip}
\begin{itemize}
	\setlength{\itemsep}{0pt}
	\setlength{\parskip}{0pt}
\item $\varphi$ determines  a quiver isomorphism from $SQ^{S}_{X}(D,a)$ to $SQ^{S}_{X}(D^{\prime},a)$.
\item For any $c_{\ast}\in{\rm SCol}_{X}(D,a)$, we have $\rho(c_{\ast})=\rho^{\prime}(\varphi(c_{\ast}))$.
\end{itemize}
\end{defi}

\begin{prop}
The shadow quandle cocycle quiver is a link invariant. Namely, for 
oriented link diagrams  $D$ and $D^{\prime}$   which are related by Reidemeister moves, the shadow quandle cocycle quivers $SQ^{S,\theta}_{X}(D,a)$ and $SQ^{S,\theta}_{X}(D^{\prime},a)$ are isomorphic.
\end{prop}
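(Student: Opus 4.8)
The plan is to reduce the statement to a single Reidemeister move and then to combine two facts that are already available: the invariance of the shadow quandle coloring quiver (Proposition \ref{shadow-quiver} together with the invariance of $Q^{S}_{X}$ proved in \cite{Cho2019quiver}), and the invariance of the shadow cocycle sum $\Phi_{\theta}$ proved in \cite{CKS}. Since a composition of isomorphisms of shadow quandle cocycle quivers is again such an isomorphism, and any two diagrams related by Reidemeister moves are related by a finite sequence of single moves, it suffices to construct, for one Reidemeister move relating $D$ and $D^{\prime}$, a bijection $\varphi:{\rm SCol}_{X}(D,a)\to{\rm SCol}_{X}(D^{\prime},a)$ that is simultaneously a quiver isomorphism $SQ^{S}_{X}(D,a)\to SQ^{S}_{X}(D^{\prime},a)$ and preserves the weights, i.e.\ $\rho(c_{\ast})=\rho^{\prime}(\varphi(c_{\ast}))$ for all $c_{\ast}$. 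We may, as usual, assume the move takes place in a small disk disjoint from $r_{\infty}$.

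First I would recall, as in the proof of Proposition \ref{shadow-quiver}, the canonical bijection $\psi_{D}:{\rm Col}_{X}(D)\to{\rm SCol}_{X}(D,a)$ sending an arc-coloring to its unique extension to a shadow coloring with value $a$ on $r_{\infty}$; since the edges of $SQ^{S}_{X}(D,a)$ are defined only through arc-restrictions, $\psi_{D}$ is a quiver isomorphism from $Q^{S}_{X}(D)$ to $SQ^{S}_{X}(D,a)$, and likewise $\psi_{D^{\prime}}$. Let $\Phi:{\rm Col}_{X}(D)\to{\rm Col}_{X}(D^{\prime})$ be the standard bijection on arc-colorings induced by the Reidemeister move: the colorings agree outside the disk, and the labels of the arcs created inside the disk are forced through the coloring condition by the labels on the boundary. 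Because $\Phi$ is defined locally by the operation $\ast$ and post-composition by any $f\in{\rm End}(X)$ commutes with $\ast$, the map $\Phi$ satisfies the condition $(\alpha)$ of Lemma \ref{alpha} (this is exactly the content behind the invariance of $Q^{S}_{X}$ in \cite{Cho2019quiver}; note $f$ need not be injective). Setting $\varphi:=\psi_{D^{\prime}}\circ\Phi\circ\psi_{D}^{-1}$ then yields a bijection ${\rm SCol}_{X}(D,a)\to{\rm SCol}_{X}(D^{\prime},a)$; by construction it preserves the value on $r_{\infty}$, so it indeed lands in ${\rm SCol}_{X}(D^{\prime},a)$, and by the computation in the proof of Lemma \ref{alpha} it is a quiver isomorphism $SQ^{S}_{X}(D,a)\to SQ^{S}_{X}(D^{\prime},a)$.

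It remains to verify that $\varphi$ preserves the weights, namely $\Phi_{\theta}(D,c_{\ast})=\Phi_{\theta}(D^{\prime},\varphi(c_{\ast}))$ for every $c_{\ast}\in{\rm SCol}_{X}(D,a)$. Because $c_{\ast}$ and $\varphi(c_{\ast})$ agree on all arcs and all regions lying outside the disk where the move is performed, the weights of the crossings not involved in the move coincide; at the crossings inside the disk, equality of the local contributions is precisely the computation showing that $\Phi_{\theta}$ is unchanged by a Reidemeister move — the normalization conditions $\theta(x,x,y)=\theta(x,y,y)=0$ handling the moves of type I and II, and the $3$-cocycle identity handling the move of type III (see \cite{CKS}). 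Hence $\rho(c_{\ast})=\Phi_{\theta}(D,c_{\ast})=\Phi_{\theta}(D^{\prime},\varphi(c_{\ast}))=\rho^{\prime}(\varphi(c_{\ast}))$, so $\varphi$ is an isomorphism of shadow quandle cocycle quivers.

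The step I expect to be the main obstacle is this last weight-preservation verification for the type III move, where one must carefully match the region label and the two arc labels appearing in Figure \ref{cocyclecondition} at each of the three crossings on both sides of the move, together with the correct signs, and see that the difference of the two weight sums is exactly the $3$-cocycle relation; a secondary point requiring care is confirming that the locally-defined bijection $\Phi$ really commutes with post-composition by an arbitrary (possibly non-injective) endomorphism for every oriented variant of each Reidemeister move. Both are classical, so in the write-up I would either reproduce the relevant local pictures or simply invoke \cite{CKS} and \cite{Cho2019quiver}.
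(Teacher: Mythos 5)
Your argument is correct and is essentially the argument the paper invokes: the paper's own proof is a one-line reference to the invariance proof of the quandle cocycle quiver in \cite{Cho2019cocycle}, and what you have written out — reduction to a single Reidemeister move, the canonical bijection on (shadow) colorings, condition $(\alpha)$ for the quiver structure, and weight preservation via the normalization and $3$-cocycle conditions — is precisely that standard argument made explicit. No gaps; your version is simply more detailed than the paper's.
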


\begin{proof}
It is seen by the same argument with the proof that the quandle cocycle quiver is a link invariant given  in \cite{Cho2019cocycle}. 
\end{proof}

In \cite{Cho2019cocycle} a polynomial-valued link invariant, called the {\it quiver enhanced cocycle polynomial}, is introduced.  The following is the shadow version.  

\begin{defi}
Let $X$ be a finite quandle, $a$ an element of $X$, $S$ a subset of ${\rm End}(X)$, $A$ an abelian group and $\theta:X^3\to A$ a $3$-cocycle.  The {\it quiver enhanced shadow cocycle polynomial} of an oriented link diagram $D$ is the polynomial
\begin{eqnarray*}
\Phi^{S,\theta}_{X}(D,a)=\sum_{ (v, w) \, : \, {\rm an\ edge\ of\ }SQ^{S,\theta}_X(D,a)} s^{\rho(v)}t^{\rho(w)}
\end{eqnarray*}
where the summation is taken over all edges $(v, w)$ of $SQ^{S,\theta}_X(D,a)$.

Obviously, this polynomial is a link invariant.

\end{defi}
\begin{exam}
The knots $4_1$ and $5_1$ are not distinguished by the quandle coloring quivers using 
the dihedral quandle $R_5$ of order $5$ and any subset $S$ of End$(R_5)$, since $|{\rm Col}_{R_5}(4_1)|=|{\rm Col}_{R_5}(5_1)|=25$ (Theorem~\ref{p-dihed}). 

Moreover, they are not distinguished by the quandle cocycle quivers using $R_5$, any  subset $S$ of End$(R_5)$ and any $2$-cocycle $\theta: X^2 \to A$, since any $2$-cocycle of $R_5$ is a coboundary (in fact $H^2(R_5; A) = 0$, \cite{CJKS2001}) and the weight given to each vertex of the quiver is $0 \in A$.  Thus the quandle cocycle quiver is the quandle coloring quiver whose vertices are labeled with $0$. 

On the other hand, these knots are distinguished by shadow quandle cocycle quivers.  
The shadow quandle cocycle quivers $SQ^{S,\theta_p}_{R_p}(4_1,0)$ and $SQ^{S,\theta_p}_{R_p}(5_1,0)$ using $R_5$, Mochizuki's $3$-cocycle $\theta_5$ and $S=\{f\}$ where $f:R_5\to R_5$ is defined by $f(x)=x+2$, are as in Figures~\ref{coloring41} and \ref{coloring51}.  

 \begin{figure}[H]
 	\begin{minipage}{0.5\hsize}
  	\begin{center}
  	\includegraphics[height=4.375cm,width=6cm]{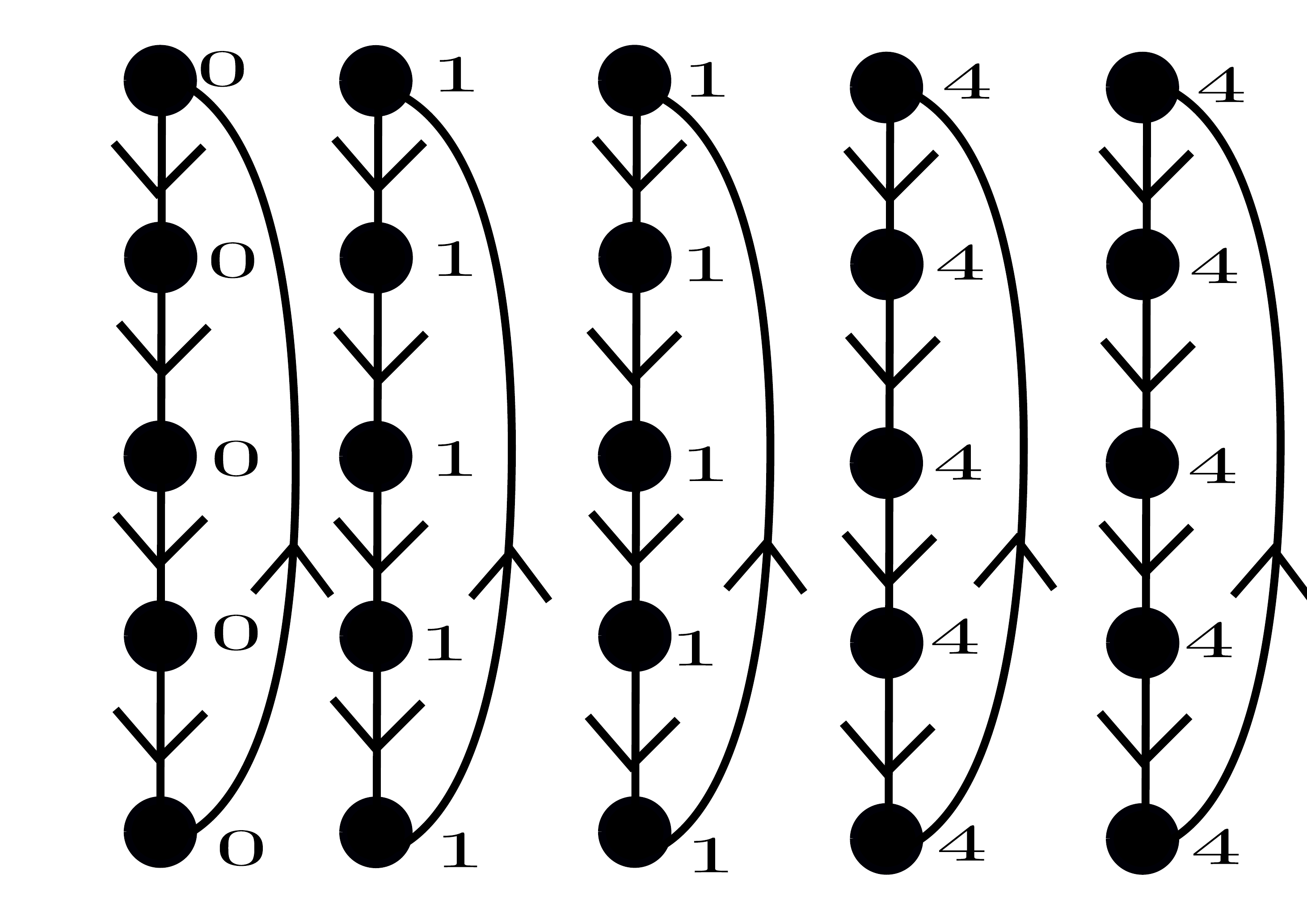}
  	\caption{$SQ^{S,\theta_p}_{R_p}(4_1,0)$}
  	\label{coloring41}
  	\end{center}
  	\end{minipage}
  	\begin{minipage}{0.5\hsize}
  	\begin{center}
  	\includegraphics[height=4.375cm,width=6cm]{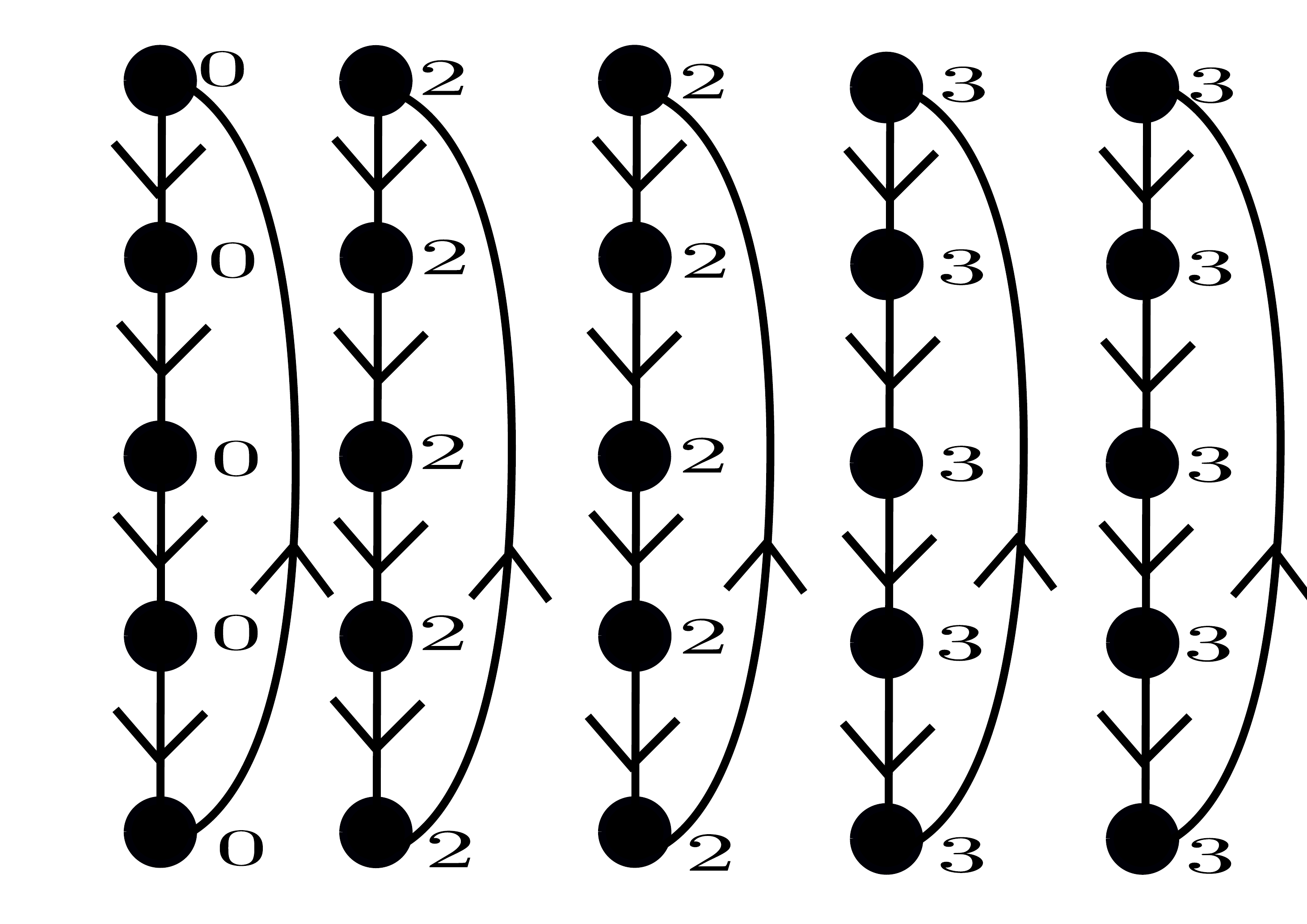}
  	\caption{$SQ^{S,\theta_p}_{R_p}(5_1,0)$}
  	\label{coloring51}
  	\end{center}
  	\end{minipage}
  \end{figure}

The quiver enhanced shadow cocycle polynomials are given as follows.
\begin{eqnarray*}
\Phi^{S,\theta_5}_{R_5}(4_1,0)&=&5+10st+10s^4t^4,\\
\Phi^{S,\theta_5}_{R_5}(5_1,0)&=&5+10s^2t^2+10s^3t^3.  
\end{eqnarray*} 
\end{exam}

\section{The shadow quandle cocycle quivers using  Mochizuki's $3$-cocycle}
\label{shadow quiver-Mochizuki}

In this section, we discuss the shadow quandle cocycle quivers using Mochizuki's $3$-cocycle. By Theorem \ref{p-dihed}, two different links which have the same $R_p$-coloring number can not be distinguished by the quandle coloring quivers using $R_p$.

 In the case of the shadow quandle cocycle quiver, we have a similar result. 
 
\begin{theo}
\label{cocycle-quiver}
Let $D$ and $D^{\prime}$ be oriented link diagrams, $p$  be an odd prime and $\theta_{p}$ be   Mochizuki's $3$-cocycle. For any $a\in R_{p}$ and $S\subset{\rm End}(R_p)$, 
the shadow quandle cocycle invariants $SQ^{S,\theta_p}_{R_p}(D,a)$ and $SQ^{S,\theta_p}_{R_p}(D^{\prime},a)$ are isomorphic if and only if 
$\Phi_{\theta_p}(D)=\Phi_{\theta_p}(D^{\prime})$. 
\end{theo}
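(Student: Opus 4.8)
The plan is to follow the pattern of the proof of Theorem~\ref{p-dihed}: reduce the statement to a single base region colour, pass to the coloring vector space via Proposition~\ref{shadow-quiver}, and then construct a $\Z_p$-linear isomorphism that simultaneously intertwines the ${\rm End}(R_p)$-action — so that, exactly as in Theorem~\ref{p-dihed}, it induces a quiver isomorphism — and preserves the Mochizuki weight $\rho$. First the reduction. Since $p$ is odd, $R_p$ is connected, so $\Phi_{\theta_p}(D)=\bigsqcup_{b\in R_p}M_D^{b}$, where $M_D^{b}:=\{\Phi_{\theta_p}(D,c_{\ast})\mid c_{\ast}\in{\rm SCol}_{R_p}(D,b)\}$, and it is classical that each $M_D^{b}$ is independent of $b$; write $M_D$ for this common multiset. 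Thus $\Phi_{\theta_p}(D)=\Phi_{\theta_p}(D')$ is equivalent to $M_D=M_{D'}$. The \emph{only if} direction is then immediate: an isomorphism of shadow cocycle quivers yields in particular a weight-preserving bijection ${\rm SCol}_{R_p}(D,a)\to{\rm SCol}_{R_p}(D',a)$, whence $M_D=M_{D'}$ and so $\Phi_{\theta_p}(D)=\Phi_{\theta_p}(D')$.

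For the \emph{if} direction, assume $M_D=M_{D'}$. Using Proposition~\ref{shadow-quiver}, identify ${\rm SCol}_{R_p}(D,a)$ with the $\Z_p$-vector space $V:={\rm Col}_{R_p}(D)$ and ${\rm SCol}_{R_p}(D',a)$ with $V':={\rm Col}_{R_p}(D')$; under these identifications the vertex weight becomes a function $\rho\colon V\to\Z_p$, resp.\ $\rho'\colon V'\to\Z_p$, and $M_D=M_{D'}$ forces $\dim_{\Z_p}V=\dim_{\Z_p}V'$. The crux is the following structural description of $\rho$, which I would take from the known analysis of Mochizuki's cocycle on $R_p$: (i) $\Phi_{\theta_p}(D,c_{\ast})$ depends only on the underlying $R_p$-coloring, not on the base region colour — the $2$-cocycle governing this dependence is a coboundary since $H^2_Q(R_p;\Z_p)=0$; (ii) using the homogeneity $\theta_p(\lambda x,\lambda y,\lambda z)=\lambda^2\theta_p(x,y,z)$ (a direct computation with Fermat's little theorem) together with translation invariance of the invariant, one gets $\rho(f\circ c)=\lambda^2\rho(c)$ for every $f\in{\rm End}(R_p)$, writing $f(x)=\lambda x+b$ as in Lemma~\ref{end-dihed}; in particular $\rho$ factors through $\overline V:=V/\langle c_1\rangle$, where $c_1$ is the constant coloring onto $1$; and (iii) the induced function $\overline\rho\colon\overline V\to\Z_p$ is a quadratic form over $\F_p$.

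Granting this, I would finish as follows. The projection $\pi\colon V\to\overline V$ has fibres of size $p$ and $\rho=\overline\rho\circ\pi$, so $M_D$ consists of $p$ copies of the value-multiset $\{\overline\rho(v)\mid v\in\overline V\}$, and similarly for $D'$; hence $\overline\rho$ and $\overline\rho'$ are quadratic forms over $\F_p$ on spaces of equal dimension with equal value-multisets. Since $p$ is odd, two such forms are isometric, so there is a $\Z_p$-linear isomorphism $\overline\varphi\colon\overline V\to\overline{V'}$ with $\overline\rho'\circ\overline\varphi=\overline\rho$. Choose linear complements $V=\langle c_1\rangle\oplus W$ and $V'=\langle c_1'\rangle\oplus W'$, transport $\overline\varphi$ through the isomorphisms $W\cong\overline V$ and $\overline{V'}\cong W'$, and define $\varphi\colon V\to V'$ by $\varphi(c_1)=c_1'$ and $\varphi|_{W}$ equal to the transported isomorphism. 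Then $\varphi$ is a $\Z_p$-linear isomorphism with $\varphi(c_1)=c_1'$ that induces $\overline\varphi$ on quotients, so $\rho'\circ\varphi=\rho$, and, exactly as in the proof of Theorem~\ref{p-dihed}, $\varphi(f\circ c)=f\circ\varphi(c)$ for every $f\in{\rm End}(R_p)$. Applying Lemma~\ref{alpha} together with Proposition~\ref{shadow-quiver}, this $\varphi$ determines a quiver isomorphism $SQ^{S}_{R_p}(D,a)\to SQ^{S}_{R_p}(D',a)$ for every $S\subset{\rm End}(R_p)$, and since it preserves the weights $\rho$, it is an isomorphism of shadow quandle cocycle quivers.

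I expect item (iii) to be the main obstacle. The cocycle $\theta_p$ is itself a polynomial of degree $p+1$ in its variables, so it is not at all formal that the state sum $\Phi_{\theta_p}(D,c_{\ast})=\sum_{\tau}\pm\theta_p(x_\tau,y_\tau,z_\tau)$ collapses, as a function of the coloring, to a quadratic form on ${\rm Col}_{R_p}(D)/\langle c_1\rangle$; making this precise should require a finer description of the Mochizuki shadow cocycle invariant of $R_p$ — for instance its identification with the $\Z_p$-valued linking form on the first homology of the double branched cover, adapted to links. The remaining ingredients — connectivity of $R_p$, the vanishing of $H^2_Q(R_p;\Z_p)$, the homogeneity of $\theta_p$, and the classification of $\F_p$-quadratic forms by their value-multiset for $p$ odd — are standard.
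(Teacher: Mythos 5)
Your reduction (restricting to a fixed base-region colour, passing to the coloring space, and noting $\rho(ac+bc_1)=a^2\rho(c)$ so that a weight-preserving $\Z_p$-linear map fixing $c_1$ would do the job) is sound and parallels the paper's setup. But the argument then rests entirely on your item (iii) --- that the induced function $\overline\rho$ on ${\rm Col}_{R_p}(D)/\langle c_1\rangle$ is a quadratic form over $\F_p$ --- and this is asserted, not proved. Homogeneity $\rho(\lambda c)=\lambda^2\rho(c)$ is all that Lemmas \ref{arc-cocycle}--\ref{triv-cocycle} give you; quadraticity additionally requires that $(c,c')\mapsto\rho(c+c')-\rho(c)-\rho(c')$ be bilinear, which for a state sum of a degree-$(p+1)$ polynomial cocycle is a genuinely nontrivial statement that you would have to import from elsewhere (you say as much yourself). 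A second, smaller gap: even granting (iii), the claim that two $\F_p$-quadratic forms on spaces of equal dimension with equal value-multisets are isometric needs an argument, in particular because the forms here may be degenerate and you must control the radical as well as the discriminant. As written, the \emph{if} direction is therefore not established.

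The paper's own proof shows that none of this machinery is needed: it never produces a linear isomorphism at all. Instead it uses only the homogeneity $\Phi_{\theta_p}(D,a_fc_{\ast}+b_fc_1)=a_f^2\Phi_{\theta_p}(D,c_{\ast})$ to see that the weight is constant up to the factor $a_f^2$ along each ${\rm Aut}(R_p)$-orbit $\{a_fc_{\ast}+b_fc_1\}$, and then greedily matches orbits of $D$ with orbits of $D'$ having equal weight (the multiset hypothesis guarantees at each stage that an unmatched partner with the correct weight exists). The resulting bijection $\Psi$ is ${\rm End}(R_p)$-equivariant by construction, hence a quiver isomorphism for every $S$, and weight-preserving --- but it is merely a bijection, not linear. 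If you want to salvage your approach, you would either need to prove (iii) (which amounts to a substantive result about the Mochizuki shadow cocycle invariant) or switch to this orbit-matching argument, which requires nothing beyond Lemmas \ref{arc-cocycle}, \ref{scalar-cocycle} and \ref{triv-cocycle}.
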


Note that the set of shadow $R_p$-colorings of an oriented link diagram $D$  is a vector space over $\Z_p$. It is known that $\Phi_{\theta_p}(D)$ is characterized by the following lemmas.

\begin{lemm}[\cite{Satoh}]
\label{arc-cocycle}
Let $c_{\ast}$ and $c^{\prime}_{\ast}$ be shadow $R_p$-colorings of an oriented link diagram $D$ such that $c_{\ast}|_{{\rm Arc}(D)}=c^{\prime}_{\ast}|_{{\rm Arc}(D)}$. Then it holds that $\Phi_{\theta_p}(D,c_{\ast})=\Phi_{\theta_p}(D,c^{\prime}_{\ast})$.
\end{lemm}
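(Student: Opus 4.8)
The plan is to measure exactly how $\Phi_{\theta_p}(D,c_{\ast})$ changes when the arc part of the shadow coloring is held fixed and only the region part is moved. First I would recall that a shadow $R_p$-coloring of $D$ is determined by $c_{\ast}|_{\mathrm{Arc}(D)}$ together with the single value $c_{\ast}(r_{\infty})$, because the region relation $c_{\ast}(r_2)=2c_{\ast}(a)-c_{\ast}(r_1)$ propagates the labels across the arcs. So let $c_{\ast},c^{\prime}_{\ast}$ be shadow $R_p$-colorings with $c_{\ast}|_{\mathrm{Arc}(D)}=c^{\prime}_{\ast}|_{\mathrm{Arc}(D)}$, put $d(r)=c^{\prime}_{\ast}(r)-c_{\ast}(r)$, and note that the same relation forces $d(r_2)=-d(r_1)$ across every arc. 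Since the complementary regions of any link diagram admit a checkerboard $2$-coloring, there is a sign function $\epsilon:\mathrm{Region}(D)\to\{+1,-1\}$ with $\epsilon(r_2)=-\epsilon(r_1)$ across every arc and $\epsilon(r_{\infty})=1$; as the region adjacency graph is connected, $d=t\,\epsilon$ where $t=c^{\prime}_{\ast}(r_{\infty})-c_{\ast}(r_{\infty})$.

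Next I would use the special form of Mochizuki's cocycle: $\theta_p(x,y,z)=(x-y)P(y,z)$ is affine-linear in its first variable $x$, which --- in the shadow weight of a crossing (Figure~\ref{cocyclecondition}) --- is a region label, while the remaining two variables are arc labels. Writing the weight at a crossing $\tau$ as $\mathrm{sgn}(\tau)\,\theta_p\bigl(c_{\ast}(r_{\tau}),y_{\tau},z_{\tau}\bigr)$ with $r_{\tau}$ the region contributing the first variable and $y_{\tau},z_{\tau}$ the relevant arc labels, the terms depending only on the arc labels cancel in the difference and one gets
\[
\Phi_{\theta_p}(D,c^{\prime}_{\ast})-\Phi_{\theta_p}(D,c_{\ast})=\sum_{\tau}\mathrm{sgn}(\tau)\,d(r_{\tau})\,P(y_{\tau},z_{\tau})=t\sum_{\tau}\mathrm{sgn}(\tau)\,\epsilon(r_{\tau})\,P(y_{\tau},z_{\tau}).
\]
Thus the lemma is equivalent to the identity $\sum_{\tau}\mathrm{sgn}(\tau)\,\epsilon(r_{\tau})\,P(y_{\tau},z_{\tau})=0$ for every $R_p$-coloring of $\mathrm{Arc}(D)$, which (taking $t=1$) is what remains.

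The hard part is establishing that vanishing; morally the sum is a coboundary contribution that telescopes to $0$ under the Reidemeister-type cancellation used to prove cocycle invariance, but packaging it cleanly is delicate because the first-variable-linear part $x\mapsto xP(y,z)$ of $\theta_p$ is not on its own the coboundary of a quandle $2$-cochain. I would therefore either carry out the cancellation by hand, using the explicit polynomial form of $P$ (it is $\tfrac1p$ times an explicit integer polynomial read modulo $p$, with $P(y,y)=0$) together with the $3$-cocycle identity for $\theta_p$, or --- more expediently --- invoke Satoh's closed-form expression for $\Phi_{\theta_p}(D,c_{\ast})$ purely in terms of $c_{\ast}|_{\mathrm{Arc}(D)}$ \cite{Satoh}, which makes the independence from the region labels immediate. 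In a write-up I would present the reduction above as the conceptual skeleton and defer this final polynomial identity to \cite{Satoh}.
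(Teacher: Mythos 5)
The paper does not actually prove this lemma: it is quoted from Satoh's work with a bare citation, so there is no in-paper argument to compare yours against line by line. Judged on its own terms, your reduction is correct and well organized: a shadow coloring is indeed determined by its arc part together with $c_{\ast}(r_{\infty})$; the difference $d=c^{\prime}_{\ast}-c_{\ast}$ of two shadow colorings with the same arc part does satisfy $d(r_2)=-d(r_1)$ across each arc, hence equals $t\,\epsilon$ for a checkerboard sign function $\epsilon$; and since $\theta_p(x,y,z)=(x-y)P(y,z)$ is affine in the region variable, the difference of the two state sums collapses to $t\sum_{\tau}\mathrm{sgn}(\tau)\,\epsilon(r_{\tau})\,P(y_{\tau},z_{\tau})$. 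This correctly isolates what has to be shown.

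However, that isolated identity \emph{is} the lemma --- all of the content specific to Mochizuki's cocycle lives in the vanishing of $\sum_{\tau}\mathrm{sgn}(\tau)\,\epsilon(r_{\tau})\,P(y_{\tau},z_{\tau})$, and your write-up does not establish it. You acknowledge this yourself: the linear-in-$x$ part $x\mapsto xP(y,z)$ is not visibly a coboundary, the telescoping is not carried out, and your fallback is to invoke Satoh's closed-form expression for $\Phi_{\theta_p}(D,c_{\ast})$ in terms of the arc labels alone --- which is precisely the statement being proved, so the argument becomes circular as a purported independent proof. As a result the proposal is an honest and useful reorganization of the problem (arguably more informative than the paper's bare citation), but it is not a proof: the essential step is missing, and everything genuinely nontrivial is still deferred to the same external reference the paper cites. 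To close the gap you would need to verify the checkerboard-weighted sum vanishes, e.g.\ by the explicit polynomial computation around each arc of the diagram using the coloring condition, which is in effect what Satoh's argument does.
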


\begin{lemm}[\cite{Satoh}]
\label{scalar-cocycle}
Let $c_{\ast}$ be a shadow $R_p$-coloring of an oriented link diagram $D$. Then it holds that $\Phi_{\theta_p}(D,ac_{\ast})=a^2\Phi_{\theta_p}(D,c_{\ast})$ for any $a\in\Z_p$.
\end{lemm}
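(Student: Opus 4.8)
The plan is to reduce the global statement about $\Phi_{\theta_p}$ to a pointwise homogeneity property of Mochizuki's cocycle $\theta_p$ itself, and then to establish that homogeneity by a careful analysis of the integer lifts hidden in the definition of $\theta_p$. First I would observe that for a scalar $a\in\Z_p$ the map $ac_{\ast}$ is again a shadow $R_p$-coloring — this is exactly the $\Z_p$-vector space structure noted just before the lemma — and that passing from $c_{\ast}$ to $ac_{\ast}$ replaces, at every crossing, the triple of labels $(x,y,z)$ occurring in the weight by $(ax,ay,az)$, while leaving the signs $\pm$ untouched. Hence
\[
\Phi_{\theta_p}(D,ac_{\ast})=\sum_{\text{crossings}}\pm\,\theta_p(ax,ay,az),
\]
and the lemma follows once I prove the pointwise identity $\theta_p(ax,ay,az)=a^2\,\theta_p(x,y,z)$ for all $a\in\Z_p$ and $x,y,z\in R_p$.

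Then I would prove this identity. Writing $\theta_p(x,y,z)=(x-y)\,N(y,z)/p$ with $N(y,z)=y^p+(2y-z)^p-2z^p$, the prefactor $(x-y)$ scales linearly, so the real work is to show $N(ay,az)/p\equiv a^p N(y,z)/p\pmod p$. The key arithmetic input is the congruence $m^p\equiv(m')^p\pmod{p^2}$ whenever $m\equiv m'\pmod p$, which I would prove by expanding $(m'+pk)^p$ binomially and noting every term past the first carries a factor $p^2$. Since $N$ is a $\Z$-linear combination of $p$-th powers of the integral linear forms $y$, $2y-z$, $z$, and each such form scales exactly as $\ell(ay,az)=a\,\ell(y,z)$ over $\Z$, applying the congruence to each representative gives $N(ay,az)\equiv a^p N(y,z)\pmod{p^2}$. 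Dividing by $p$ — which is legitimate because $N$ is divisible by $p$, as $\theta_p$ is a well-defined $\Z_p$-valued cocycle (the Example) — yields $N(ay,az)/p\equiv a^p N(y,z)/p\pmod p$. Combining with the linear prefactor and Fermat's little theorem $a^p\equiv a\pmod p$ gives $\theta_p(ax,ay,az)\equiv a^{p+1}\theta_p(x,y,z)\equiv a^2\theta_p(x,y,z)\pmod p$, after which summation over crossings finishes the proof.

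The main obstacle is the lifting bookkeeping in the second step: the definition of $\theta_p$ secretly depends on a choice of integer representatives, and the representative $\overline{ay}$ of $ay$ is \emph{not} $a$ times the representative of $y$. The congruence $m^p\equiv(m')^p\pmod{p^2}$ is precisely what renders the answer independent of this choice, letting me replace $\overline{ay},\overline{az}$ by the naive lifts $a\tilde y,a\tilde z$ at the cost of an error divisible by $p^2$, which then vanishes after dividing by $p$ and reducing mod $p$. Everything else — closure of shadow colorings under scalars, linearity of the prefactor, and the passage from $a^{p+1}$ to $a^2$ via Fermat — is routine, and the argument is robust to the precise integral linear forms appearing in $N$ so long as $N$ is a $\Z$-combination of $p$-th powers of such forms.
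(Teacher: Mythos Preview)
The paper does not give its own proof of this lemma; it is quoted from \cite{Satoh} without argument. Your proof is correct and self-contained. The reduction to the pointwise identity $\theta_p(ax,ay,az)=a^2\theta_p(x,y,z)$ is immediate, since the shadow coloring conditions for $R_p$ are homogeneous $\Z_p$-linear equations (so $ac_\ast$ is again a shadow coloring and the label triple at each crossing scales by $a$) and the crossing signs are independent of the coloring. Your handling of the integer-lift ambiguity via the congruence $m\equiv m'\pmod p\Rightarrow m^p\equiv (m')^p\pmod{p^2}$ is exactly the right mechanism, and it applies uniformly to each of the $p$-th powers of integral linear forms appearing in $N(y,z)$; combined with Fermat's little theorem this yields the factor $a^{p+1}\equiv a^2$. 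There is nothing to compare against in the paper itself.
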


\begin{lemm}[\cite{Satoh}]
\label{triv-cocycle}
Let $c_{\ast}$ and $c_{\ast,0}$ be shadow $R_p$-colorings of an oriented link diagram $D$ such that $c_{\ast,0}|_{{\rm Arc}(D)}$ is a trivial coloring. Then it holds that $\Phi_{\theta_p}(D,c_{\ast})=\Phi_{\theta_p}(D,c_{\ast}+c_{\ast,0})$.
\end{lemm}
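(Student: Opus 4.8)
The plan is to reduce the statement to a single clean case---adding a \emph{globally} constant shadow coloring---by means of Lemma~\ref{arc-cocycle}, and then to treat that case by recognizing the operation as pulling Mochizuki's cocycle back along a translation automorphism of $R_p$.

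First I would record the constant $k\in\Z_p$ with $c_{\ast,0}|_{{\rm Arc}(D)}\equiv k$ and introduce the shadow coloring $\mathbf{k}\in{\rm SCol}_{R_p}(D)$ that assigns $k$ to every arc and every region. Since the shadow $R_p$-colorings form a $\Z_p$-vector space and $\mathbf{k}$ is one of them (a constant map is always a shadow coloring), $c_{\ast,0}-\mathbf{k}$ is again a shadow coloring, and its restriction to ${\rm Arc}(D)$ is the zero coloring. Writing $c_{\ast}+c_{\ast,0}=(c_{\ast}+\mathbf{k})+(c_{\ast,0}-\mathbf{k})$, the colorings $c_{\ast}+\mathbf{k}$ and $c_{\ast}+c_{\ast,0}$ have identical arc restrictions, so Lemma~\ref{arc-cocycle} yields $\Phi_{\theta_p}(D,c_{\ast}+c_{\ast,0})=\Phi_{\theta_p}(D,c_{\ast}+\mathbf{k})$. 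It then suffices to prove the global-shift identity $\Phi_{\theta_p}(D,c_{\ast}+\mathbf{k})=\Phi_{\theta_p}(D,c_{\ast})$ for every $c_{\ast}$ and every $k$.

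For the global-shift identity I would use that the translation $T_k\colon R_p\to R_p$, $x\mapsto x+k$, is a quandle automorphism (immediate from $x\ast y=2y-x$), that $c_{\ast}+\mathbf{k}=T_k\circ c_{\ast}$, and that unwinding the weight at each crossing gives $\Phi_{\theta_p}(D,T_k\circ c_{\ast})=\Phi_{T_k^{\ast}\theta_p}(D,c_{\ast})$, where $(T_k^{\ast}\theta_p)(x,y,z)=\theta_p(x+k,y+k,z+k)$. The pullback $T_k^{\ast}\theta_p$ is again a $3$-cocycle, so the identity follows from the standard fact that the shadow cocycle invariant of a fixed coloring depends only on the cohomology class of the $3$-cocycle used (two cohomologous cocycles differ by a coboundary, whose crossing contributions telescope around the diagram and cancel). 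Thus everything comes down to showing that $T_k^{\ast}\theta_p$ and $\theta_p$ are cohomologous.

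I expect this last point to be the crux. My preferred route is to observe that, because $p$ is odd and hence $2\in\Z_p^{\times}$, the translation is a composite of inner automorphisms: with $z_0=2^{-1}k$ one has $(\ast z_0)\circ(\ast 0)=T_k$, since $(\ast z_0)(\ast 0(x))=x+2z_0$. Invoking the standard fact that inner automorphisms of a quandle act as the identity on its cohomology, one gets $T_k^{\ast}[\theta_p]=[\theta_p]$, as desired. The fully self-contained alternative, should that fact be unavailable, is to exhibit an explicit $2$-cochain $\eta\colon R_p^{2}\to\Z_p$ with $T_k^{\ast}\theta_p-\theta_p=\delta\eta$; this reduces to an elementary manipulation of the Fermat-quotient expression defining $\theta_p$, using that $(a+k)^p-a^p-k^p$ is divisible by $p$. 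In either approach the degree-$2$ homogeneity of Lemma~\ref{scalar-cocycle} and the vanishing of $\theta_p$ on constant triples provide useful consistency checks.
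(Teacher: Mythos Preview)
The paper does not supply its own proof of this lemma: it is quoted from \cite{Satoh} and stated without argument, so there is no in-paper proof to compare against.

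Evaluated on its own terms, your plan is sound. The first reduction---replacing $c_{\ast,0}$ by the globally constant shadow coloring $\mathbf{k}$ via Lemma~\ref{arc-cocycle}---is clean and correct, since $c_{\ast}+c_{\ast,0}$ and $c_{\ast}+\mathbf{k}$ agree on ${\rm Arc}(D)$. The identification $c_{\ast}+\mathbf{k}=T_k\circ c_{\ast}$ and the rewriting $\Phi_{\theta_p}(D,T_k\circ c_{\ast})=\Phi_{T_k^{\ast}\theta_p}(D,c_{\ast})$ are both valid, as is the statement that $\Phi$ for a fixed shadow coloring depends only on the cohomology class of the $3$-cocycle. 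The only place that needs more than a sentence is the claim that $T_k^{\ast}\theta_p$ and $\theta_p$ are cohomologous. Your factorization $T_k=(\ast\,2^{-1}k)\circ(\ast\,0)$ is correct, and the fact that inner automorphisms act trivially on quandle cohomology is indeed known, but it is not proved in this paper and deserves a citation if you rely on it. The alternative route---producing an explicit $2$-cochain $\eta$ with $T_k^{\ast}\theta_p-\theta_p=\delta\eta$---is the self-contained option; for Mochizuki's cocycle this is a genuine (if elementary) computation with Fermat quotients, and you should actually carry it out rather than assert it.
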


\begin{proof}[Proof of Theorem \ref{cocycle-quiver}]
The only if part is trivial. We show the if part. 
By Lemma \ref{arc-cocycle}, we have $\{\Phi_{\theta_p}(D,c_{\ast})\mid c_{\ast}\in{\rm SCol}_{R_p}(D,a)\}=\{\Phi_{\theta_p}(D,c_{\ast})\mid c_{\ast}\in{\rm SCol}_{R_p}(D,b)\}$ for any $a,b\in R_p$. Therefore, $\Phi_{\theta_p}(D)=p\{\Phi_{\theta_p}(D,c_{\ast})\mid c_{\ast}\in{\rm SCol}_{R_p}(D,a)\}$ (cf. \cite{Satoh}).

We construct a bijection $\Psi$: SCol$_{R_p}(D,a)\to{\rm SCol}_{R_p}(D^{\prime},a)$ as follow:

Let $\varphi:{\rm Col}_{R_p}(D)\to{\rm SCol}_{R_p}(D,a)$ and $\varphi^{\prime}:{\rm Col}_{R_p}(D^{\prime})\to{\rm SCol}_{R_p}(D^{\prime},a)$ be the natural bijections as in the proof of Proposition \ref{shadow-quiver}.

Firstly, put $V_0=\{c_{\ast}\in{\rm SCol}_{X}(D,a)\mid c_{\ast}|_{{\rm Arc}(D)}\ {\rm is \ a\ trivial\ coloring}\}$ and $V^{\prime}_0=\{c^{\prime}_{\ast}\in{\rm SCol}_{X}(D^{\prime},a)\mid c^{\prime}_{\ast}|_{{\rm Arc}(D^{\prime})} \ {\rm is\ a\ trivial\ coloring}\}$. We define a map $\Psi_0:V_0\to V^{\prime}_0$ by $\Psi_0(c_{\ast})(x^{\prime})=c_{\ast}(x)$ for any $x\in{\rm Arc}(D)$ and $x^{\prime}\in{\rm Arc}(D^{\prime})$.
 
Next, take an element $c_{\ast,1}\in{\rm SCol}_{R_p}(D,a)\backslash V_0$ and fix it. By assumption, there is a shadow $R_p$-coloring $c^{\prime}_{\ast,1}\in{\rm SCol}_{R_p}(D^{\prime},a)\backslash V^{\prime}_0$ such that $\Phi_{\theta_p}(D, c_{\ast,1})=\Phi_{\theta_p}(D^{\prime},c^{\prime}_{\ast,1})$. 
Let $c_1$ and $c^{\prime}_1$ be the shadow colorings of $D$ and $D^{\prime}$ respectively such that 
for any 
$x\in{\rm Arc}(D)$ and $x^{\prime}\in{\rm Arc}(D^{\prime})$, $c_1(x) = c^{\prime}_1(x^{\prime}) =1 $ and $c_1(r_\infty) = c^{\prime}_1(r_\infty) =a$.

Put $V_1=\{\varphi(f\circ \varphi^{-1}(c_{\ast,1}))\mid f\in {\rm Aut}(R_p)\}$ and $V^{\prime}_1=\{\varphi^{\prime}(f\circ (\varphi^{\prime})^{-1}(c^{\prime}_{\ast,1}))\mid f\in {\rm Aut}(R_p)\}$. We define a map $\Psi_1:V_0\cup V_1\to V^{\prime}_0\cup V^{\prime}_{1}$ by $\Psi_1|_{V_0}=\Psi_0$ and $\Psi_{1}(\varphi(f\circ \varphi^{-1}(c_{\ast,1})))=\varphi^{\prime}(f\circ (\varphi^{\prime})^{-1}(c^{\prime}_{\ast,1}))$ for any $f\in{\rm Aut}(R_p)$. Since $f$ is an automorphism and $c^{\prime}_{\ast,1}$ is a nontirivial coloring, $\Psi_1$ is a bijection. Let $f$ be a quandle automorphism of $R_p$. By definition of $\varphi$, we have 
\begin{eqnarray*}
\varphi(f\circ \varphi^{-1}(c_{\ast,1}))|_{{\rm Arc}(D)}&=&f\circ c_{\ast,1}|_{{\rm Arc}(D)}\\
&=&(a_{f}c_{\ast,1}+b_fc_1)|_{{\rm Arc}(D)}
\end{eqnarray*}
 where $a_f \in\Z^{\times}_p$ and $b_f \in \Z_p $ are determined from $f$ by Lemma~\ref{end-dihed} (Remark \ref{endo-auto}) with $f(x)=a_fx+b_f$ . By Lemmas \ref{arc-cocycle}, \ref{scalar-cocycle} and \ref{triv-cocycle} , it holds that
 \begin{eqnarray*}
 \Phi_{\theta_p}(D,\varphi(f\circ \varphi^{-1}(c_{\ast,1})))&=&\Phi_{\theta_p}(D,a_fc_{\ast,1}+b_fc_{1})\\
 &=&(a_f)^2\Phi_{\theta_p}(D,c_{\ast,1}).
 \end{eqnarray*}
  Similarly, we have $\Phi_{\theta_p}(D^{\prime},\varphi^{\prime}(f\circ (\varphi^{\prime})^{-1}(c^{\prime}_{\ast,1})))=(a_f)^2\Phi_{\theta_p}(D^{\prime},c^{\prime}_{\ast,1})$ . By assumption, we see that
\begin{eqnarray*}
\Phi_{\theta_p}(D,\varphi(f\circ \varphi^{-1}(c_{\ast,1})))&=&(a_f)^2\Phi_{\theta_p}(D,c_{\ast,1})\\
&=&(a_f)^2\Phi_{\theta_p}(D^{\prime},c^{\prime}_{\ast,1})\\
&=&\Phi_{\theta_p}(D^{\prime},\varphi^{\prime}(f\circ (\varphi^{\prime})^{-1}(c^{\prime}_{\ast,1}))).
\end{eqnarray*}
 Then it holds that $\{\Phi_{\theta_p}(D,c_{\ast})\mid c_{\ast}\in V_0\cup V_1\}=\{\Phi_{\theta_p}(D^{\prime},c^{\prime}_{\ast})\mid c^{\prime}_{\ast}\in V^{\prime}_0\cup V^{\prime}_1\}$.

Suppose that ${\rm SCol}_{R_p}(D,a)\backslash (V_0\cup V_1) \neq \emptyset$.  
Take an element  $c_{\ast,2}$ of ${\rm SCol}_{R_p}(D,a)\backslash (V_0\cup V_1)$ and fix it.  There is a shadow $R_p$-coloring $c^{\prime}_{\ast,2}\in{\rm SCol}_{R_p}(D,a)\backslash (V^{\prime}_0\cup V^{\prime}_1)$ such that
$\Phi_{\theta_p}(D, c_{\ast,2})=\Phi_{\theta_p}(D^{\prime},c^{\prime}_{\ast,2})$. 
    Put $V_2=\{\varphi(f\circ \varphi^{-1}(c_{\ast,2}))\mid f\in {\rm Aut}(R_p)\}$ and $V^{\prime}_2=\{\varphi^{\prime}(f\circ (\varphi^{\prime})^{-1}(c^{\prime}_{\ast,2}))\mid f\in {\rm Aut}(R_p)\}$. Then, we define a bijection $\Psi_2:V_0\cup V_1\cup V_2\to V^{\prime}_0\cup V^{\prime}_{1}\cup V^{\prime}_2$ by $\Psi_1|_{V_0\cup V_1}=\Psi_1$ and $\Psi_{2}(\varphi(f\circ \varphi^{-1}(c_{\ast,2})))=\varphi^{\prime}(f\circ (\varphi^{\prime})^{-1}(c^{\prime}_{\ast,2}))$ for any automorphism $f\in{\rm Aut}(R_p)$.
Repeating this proceedure until the domain of $\Psi_n$ is ${\rm SCol}_{R_p}(D,a)$, we have a bijection $\Psi=\Psi_n:{\rm SCol}_{R_p}(D,a)\to{\rm SCol}_{R_p}(D^{\prime},a)$ such that $\Phi_{\theta_p}(D,c_{\ast})=\Phi_{\theta_p}(D^{\prime},\Psi(c_{\ast}))$ for any $c_{\ast}\in{\rm SCol}_{R_p}(D,a)$. By Remark \ref{endo-auto}, $\Psi$ is a quiver isomorphism between $SQ^{S}_{R_p}(D,a)$ and $SQ^{S}_{R_p}(D^{\prime},a)$ for any subset $S\subset{\rm End}(R_p)$.
\end{proof}

\section*{Acknowledgements} 

The author would like to thank Seiichi Kamada and Hirotaka Akiyoshi for helpful advice and discussions on this research.

\bibliographystyle{plain}
\bibliography{reference}

\end{document}